\documentclass[12pt,leqno]{amsart}
\usepackage{amsmath, amssymb, latexsym, amscd, amsthm,amsfonts,amstext}
\usepackage[mathscr]{eucal}

\theoremstyle{plain}
\newtheorem{lemma}{Lemma}[section]
\newtheorem{theorem}[lemma]{Theorem}

\newtheorem{proposition}[lemma]{Proposition}

\newtheorem{remark}[lemma]{Remark}

\def\underset#1#2{{\mathrel{\mathop {{}_{} {#2}}\limits_{{#1}_{}}}}}
\def\upplim_#1{\underset{#1}{\overline\lim}\;}
\def\lowlim_#1{\underset{#1}{\underline\lim}\;}
\setlength{\textwidth}{135true mm}
\setlength{\textheight}{230true mm}
\setlength{\topmargin}{0true mm}
\setlength{\oddsidemargin}{3true mm}
\setlength{\evensidemargin}{3true mm}
\parindent=10pt
\parskip3pt

\newcommand{\ord}{{\mathrm{ord}}}


\numberwithin{equation}{section}

\begin{document}
\title[ ]{An effective schmidt's subspace theorem for hypersurfaces in subgeneral position in projective varieties over function fields}
\author{Giang Le}
\maketitle
\begin{center}
{\it Department of Mathematics, Hanoi National University of Education,}

{\it 136-Xuan Thuy, Cau Giay, HaNoi, VietNam.}

\textit{E-mail: legiang01@yahoo.com}
\end{center}
\begin{abstract} {We deduce an effective version of Schmidt's subspace theorem on a smooth projective variety $X$ over function fields of characteristic zero for hypersurfaces located in $N-$subgeneral position with respect to $X$. }
\end{abstract}

\def\thefootnote{\empty}
\footnotetext{2010 Mathematics Subject Classification:
 11J97, 11J61.\\
\hskip8pt Key words and phrases: Schmidt's subspace theorem, Function fields, Diophantine approximation.}
\section{Introduction}
Schmidt's subspace theorem is one of the most important results in the developments of Diophantine approximation. In the number field case, there is still no effective version of this theorem. On the other hand, with techniques from Nevanlinna theory it has become possible to obtain effective version of several important results in Diophantine approximation over algebraic function fields. In \cite{AW}, An and Wang obtained an effective Schmidt's subspace theorem for non-linear forms over function fields. In \cite{RW}, Ru and Wang extended such effective results to divisors of a projective variety $X\subset\mathbb{P}^M$ over function fields of characteristic 0 coming from hypersurfaces in $\mathbb{P}^M.$ Our purpose is to generalize the above results to the case in which hypersurfaces are located in $N-$subgeneral position with respect to $X$.

Recently, Chen, Ru, Yan (see \cite{CRY2}) and Levin (see \cite{Levin}, Theorem 5.1) established Schmidt's subspace theorem for hypersurfaces located in N-subgeneral position over number fields and showed the analogous result for the case of holomorphic curves. This paper is inspired by these works.

 The method given in this paper, when applied to the number field case, actually simplifies the proof in \cite{CRY2} for the case of smooth projective varieties. However, in the case of function fields we need to make explicit all the constants involved in order to obtain an effective version. For this, we need to make the constructions in \cite{CRY2} more explicit and we also need the effective version of the classical Schmidt's subspace theorem for linear forms as in \cite{W}, an effective upper bound and lower bound for Hilbert functions and an effective version of Hilbert's Nullstellensatz. In section 2, we will describe a canonical way to find polynomials from the Chow form of $X$ and generate an ideal whose radical is $I_X$. We will give some effective results in Section 3 and the main result will be deduced in Section 4.

To state our results, we will recall some definitions and basic facts from algebraic geometry.

Let $k$ be an algebraically closed field of characteristic 0 and let $V$ be  a nonsingular projective variety. We will fix an embedding of  $V$ into a projective space $\mathbb{P}^{M_0}$. Denote by $K=k(V)$ the function field of $V$. Let $M_K$ denote the set of prime divisors of $V$ (irreducible subvarieties of codimension one). Let $\mathfrak{p}$ be a prime divisor. As \emph{V} is nonsingular, the local ring $\mathcal{O}_\mathfrak{p}$ at $\mathfrak{p}$ is a discrete valuation ring. For each $x\in K^*$, its order $\ord_\mathfrak{p}x$ at $\mathfrak{p}$ is well defined. We can associate to \emph{x} its divisors
$$(x)=\sum_{\mathfrak{p}\in M_K}\ord_\mathfrak{p}(x)\mathfrak{p}=(x)_0-(x)_{\infty},$$
where $(x)_0$ is the zero divisor of $x$ and $(x)_{\infty}$ is the polar divisor of $x$ respectively. Let deg $\mathfrak{p}$ denote the projective degree of $\mathfrak{p}$ in $\mathbb{P}^{M_0}$. Then the sum formula
$$\text{deg}(x)=\sum_{\mathfrak{p}\in M_K}\ord_\mathfrak{p}(x)\deg\,\mathfrak{p}=0$$ 
holds for all $x\in K^*$.

Let $x=[x_0:x_1:\cdots:x_M]\in\mathbb{P}^M(K)$ and define
$$e_{\mathfrak{p}}(x):=\min\limits_{0\leq i\leq M}\{\ord_{\mathfrak{p}}(x_i)\}.$$
The (logarithmic) height of $x$ is defined by the following formula:
$$h(x)=-\sum_{\mathfrak{p}\in M_K}e_{\mathfrak{p}}(x)\text{deg}\,\mathfrak{p}.$$
For $x\in K^*$, the logarithmic height of $x$ is defined by:
$$h(x)=-\sum_{\mathfrak{p}\in M_K}\min\{0,\ord_{\mathfrak{p}}(x)\}\text{deg}\,\mathfrak{p}.$$
By the sum formula, it is easy to see that for $x\in K^*$,
$$h(x)=\sum_{\mathfrak{p}\in M_K}\max\{0,\ord_{\mathfrak{p}}(x)\}\text{deg}\,\mathfrak{p}.$$
Note that, the definition of $e_{\mathfrak{p}}(x)$ depends on the choice of the coordinates of $x$. Apart from this, the height function is well-defined on $\mathbb{P}^M(K)$.

Let $Q$ be a homogeneous polynomial of degree $d$ in $K[X_0,\cdots,X_M],$ say $Q(X)=\sum_Ia_I X_0^{i_0}\cdots X_M^{i_M}$ where the sum is taken over all index sets $I=\{i_0,\ldots,i_M\}$ such that $i_j\geq 0$ and $\sum\limits_{j=0}^Mi_j=d$. For $\mathfrak{p}\in M_K,$ we set
$$e_{\mathfrak{p}}(Q):=\text{min}_I\{\ord_{\mathfrak{p}}(a_I)\}.$$

We define the height of a homogeneous polynomial $Q$ of degree $d$ in $K[X_0,\ldots,X_M]$ as
$$h(Q)=\sum_{\mathfrak{p}\in M_K}-e_{\mathfrak{p}}(Q)\deg \mathfrak{p}.$$
From the sum formula, we have $h(\alpha Q)=h(Q)$ if $\alpha\in K^*.$ This also shows that $h(Q)\geq 0$ since we may assume that one of the nonzero coefficient of $Q$ is $1$.

The Weil function $\lambda_{\mathfrak{p},Q}$ is defined by
$$\lambda_{\mathfrak{p},Q}(x):=\left(\ord_{\mathfrak{p}}(Q(x))-de_{\mathfrak{p}}(x)-e_{\mathfrak{p}}(Q)\right)\text{deg}\,\mathfrak{p}\geq 0$$
for $x\in\mathbb{P}^M(K)\backslash\{Q=0\}.$

Let $Q_1, Q_2,\ldots, Q_m$ be homogeneous polynomials of degree $d$ in $K[X_0,\ldots,X_M]$. Recall that, we have
$$e_\mathfrak{p}(Q_1+\cdots+Q_m)\geq \min\{e_\mathfrak{p}(Q_1),\ldots,e_\mathfrak{p}(Q_m)\}$$
$$e_\mathfrak{p}(Q_1\cdots Q_m)=e_\mathfrak{p}(Q_1)+\cdots+e_\mathfrak{p}(Q_m).$$

We define $$e_\mathfrak{p}(Q_1,\ldots,Q_q)=\min\{e_\mathfrak{p}(Q_1),\ldots,e_\mathfrak{p}(Q_m)\}$$
$$h(Q_1,\ldots,Q_m)=-\sum_{\mathfrak{p}\in M_K}e_\mathfrak{p}(Q_1,\ldots,Q_q)\deg \mathfrak{p}.$$

Let $X$ be a $n$-dimensional projective subvariety of $\mathbb{P}^M$ defined over $K$. The height of $X$ is defined by $$h(X):=h(F_X),$$
where $F_X$ is the Chow form of $X$.

 Let $N$ be a positive integer, $N\geq n$. Homogeneous polynomials $Q_1,\ldots,Q_q\\\in K[X_0,\ldots,X_M], q\geq n+1,$ are said to be in \emph{N-subgeneral position with respect to X} if $\cap_{j=1}^{N+1}(\{Q_{i_j}=0\})\cap X(\bar{K})=\emptyset$ for any distinct $i_1,\ldots,i_{N+1}\in \{1,\ldots,q\},$ where $\bar{K}$ is the algebraic closure of $K$. When $N=n,$ they are said to be  in \emph{general position with respect to X}.

In this paper, we will prove the following effective version of the generalized Schmidt's  subspace theorem over $K$.
\begin{theorem}\label{t:11}
 Let $K$ be the function field of a nonsingular projective variety $V$ defined over an algebraically closed field of characteristic 0. Let $X$ be a smooth n-dimensional projective subvariety of $\mathbb{P}^M$ defined over K with projective degree $\triangle$. Let $Q_i, 1\leq i\leq q,$ be homogeneous polynomials of degree $d_i$ in $K[X_0,\ldots,X_M]$ in N-subgeneral position with respect to X and let S be a finite set of prime divisors of V. Then for any given $\epsilon>0$, there exists an effectively computable finite union $\mathfrak{U}_{\epsilon}$ of proper algebraic subsets of $\mathbb{P}^M(K)$ not containing $X$  and effectively computable constants $c_{\epsilon}, \tilde{c}'_{\epsilon}$ such that for any $x\in X \backslash\mathfrak{U}_{\epsilon}$ either
$$h(x)\leq c_{\epsilon},$$
or 
$$\sum_{i=1}^q\sum_{\mathfrak{p}\in S}d_i^{-1}\lambda_{\mathfrak{p},Q_i}(x)\leq (N(n+1)+\epsilon)h(x)+c'_{\epsilon}.$$

The algebraic subsets in $\mathfrak{U}_\epsilon$ and the constants $c_{\epsilon}, c'_{\epsilon}$ depend on $\epsilon, M,N,$ $q, K, S, X$ and the $Q_i.$
\end{theorem}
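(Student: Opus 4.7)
My plan is to follow the Chen-Ru-Yan/Levin strategy of reducing the $N$-subgeneral position setting to the case of general position and then to invoke the effective Schmidt subspace theorem of Ru-Wang \cite{RW}, tracking every constant via the effective Hilbert Nullstellensatz and the effective Hilbert-function estimates prepared in Sections~2 and~3.

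\emph{Step 1 (Local reduction at each place).} Fix $x\in X(K)$. For each $\mathfrak{p}\in S$ I would reorder the polynomials so that $\lambda_{\mathfrak{p},Q_{i_1}}(x)/d_{i_1}\ge\cdots\ge\lambda_{\mathfrak{p},Q_{i_q}}(x)/d_{i_q}$; at each $\mathfrak{p}$ only finitely many orderings are possible, so by passing to a finite partition of $X(K)$ according to the tuple of orderings at the places of $S$ one may assume the ordering is fixed. The $N$-subgeneral position hypothesis gives $\bigcap_{j=1}^{N+1}\{Q_{i_j}=0\}\cap X=\emptyset$, and the effective Nullstellensatz, applied on the homogeneous coordinate ring of $X$, yields a computable constant $C$ with
$$\lambda_{\mathfrak{p},Q_{i_{N+1}}}(x)/d_{i_{N+1}}\le C,$$
which bounds the tail $\sum_{j\ge N+1}\lambda_{\mathfrak{p},Q_{i_j}}/d_{i_j}$ uniformly. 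Then I would inductively build $n+1$ homogeneous polynomials $T^{(\mathfrak{p})}_1,\ldots,T^{(\mathfrak{p})}_{n+1}$ of common degree $d=\mathrm{lcm}(d_1,\ldots,d_q)$ in general position on $X$: take $T^{(\mathfrak{p})}_1:=Q_{i_1}^{d/d_{i_1}}$, and at each subsequent stage choose $T^{(\mathfrak{p})}_{s+1}$ as an explicit $K$-linear combination of the powers $Q_{i_j}^{d/d_{i_j}}$ for $j\ge s+1$ that does not vanish identically on any component of $\bigcap_{k\le s}\{T^{(\mathfrak{p})}_k=0\}\cap X$. Such combinations exist because $\bigcap_{j\le N+1}\{Q_{i_j}=0\}\cap X=\emptyset$, and explicit coefficients of controlled height are produced by the effective Nullstellensatz applied componentwise, using the canonical generators of $I_X$ obtained from the Chow form in Section~2.

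\emph{Step 2 (Weil function comparison and effective Schmidt).} Because $T^{(\mathfrak{p})}_1=Q_{i_1}^{d/d_{i_1}}$ satisfies $\lambda_{\mathfrak{p},T^{(\mathfrak{p})}_1}(x)/d=\lambda_{\mathfrak{p},Q_{i_1}}(x)/d_{i_1}$, bounding the top $N$ Weil functions by this common value and the remaining $q-N$ by $C$ yields the pointwise inequality
$$\sum_{i=1}^q\frac{\lambda_{\mathfrak{p},Q_i}(x)}{d_i}\le N\sum_{j=1}^{n+1}\frac{\lambda_{\mathfrak{p},T^{(\mathfrak{p})}_j}(x)}{d}+C_1,$$
with $C_1$ effectively computable in terms of the given data. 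Only finitely many families $\{T^{(\mathfrak{p})}_j\}_{\mathfrak{p}\in S}$ arise. For each such family, which is in general position with respect to $X$, I would apply Ru-Wang's effective Schmidt subspace theorem \cite{RW} with parameter $\epsilon/N$ to obtain
$$\sum_{\mathfrak{p}\in S}\sum_{j=1}^{n+1}\frac{\lambda_{\mathfrak{p},T^{(\mathfrak{p})}_j}(x)}{d}\le\Bigl(n+1+\frac{\epsilon}{N}\Bigr)h(x)+c''_\epsilon$$
for every $x\in X(K)$ outside an effectively computable finite union $\mathfrak{U}_\epsilon$ of proper algebraic subsets of $\mathbb{P}^M(K)$ not containing $X$, and above a threshold $h(x)>c_\epsilon$. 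Summing the inequality of Step~2 over $\mathfrak{p}\in S$ and combining with this yields the desired bound with coefficient $N(n+1)+\epsilon$.

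\emph{Main obstacle.} The heart of the argument is making the Chen-Ru-Yan construction and the comparison inequality fully \emph{effective}: in \cite{CRY2} and \cite{Levin} these steps are carried out existentially, whereas here each coefficient of each $T^{(\mathfrak{p})}_j$, each auxiliary constant, and in particular $C$ and $C_1$, must be bounded explicitly in terms of $h(X)$, the $h(Q_i)$, $\triangle$, $M$, $N$, and $n$. This requires iterating the effective Nullstellensatz over the $n+1$ inductive steps using the explicit generators of $I_X$ from the Chow form of $X$ (Section~2), combined with the effective upper and lower bounds for the Hilbert function of $X$ collected in Section~3, and then propagating these bounds through the constants produced by Ru-Wang's effective theorem.
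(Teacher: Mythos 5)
Your proposal follows the Chen-Ru-Yan route (constructing auxiliary hypersurfaces $T_1^{(\mathfrak{p})},\dots,T_{n+1}^{(\mathfrak{p})}$ in general position and then invoking Ru-Wang as a black box), whereas the paper deliberately avoids that construction and takes a simpler path, so the two arguments diverge in an essential way. The paper orders the $Q_i$ at each place, bounds $\sum_{i\le N}\lambda_{\mathfrak{p},Q_{l_i}}\le N\lambda_{\mathfrak{p},Q_{l_1}}$, controls the tail $i\ge N+1$ by the effective Nullstellensatz (Lemma~\ref{l:37}), and then filters $K[X_0,\dots,X_M]_m/(I_X)_m$ by powers of the single top hypersurface $Q_{l_1(\mathfrak{p},x)}$, reducing everything to linear forms in the monomial coordinates $\Phi(x)$ and finishing with the linear effective Schmidt theorem (Theorem~A). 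The coefficient $n+1$ then comes from the Hilbert-function asymptotics of Proposition~\ref{p:23}, not from a general-position family.

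There are two concrete gaps in Step~2 of your plan. First, the families $\{T_j^{(\mathfrak{p})}\}$ you build are genuinely place-dependent (and $x$-dependent through the ordering), so after partitioning $X(K)$ by the tuple of orderings you still need an inequality of the form $\sum_{\mathfrak{p}\in S}\sum_j\lambda_{\mathfrak{p},T_j^{(\sigma_{\mathfrak{p}})}}(x)/d\le(n+1+\epsilon')h(x)+c$ with $\sigma_{\mathfrak{p}}$ varying with $\mathfrak{p}$. Ru-Wang's hypersurface theorem, cited as a black box, applies only to a single fixed family across all of $S$; summing the $q!$ statements for all orderings destroys the main term. The place-dependent selection is exactly what the $\max_J$ structure in the linear Theorem~A is designed to handle, and the paper reaches it directly by mapping $x$ to $\Phi(x)$ and choosing, for each $\mathfrak{p}$, the linear forms arising from the filtration by $Q_{l_1(\mathfrak{p},x)}$ (cf.~\eqref{e:310}). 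To repair your argument you would in effect have to unwind Ru-Wang's proof to its linear core, at which point you would be redoing what the paper does, but with the extra $T_j$ construction grafted on.

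Second, the construction of $T_2^{(\mathfrak{p})},\dots,T_{n+1}^{(\mathfrak{p})}$ as $K$-linear combinations of the $Q_{i_j}^{d/d_{i_j}}$ that avoid every component of $\bigcap_{k\le s}\{T_k^{(\mathfrak{p})}=0\}\cap X$ is an effectivity problem that the effective Nullstellensatz does not directly solve: it certifies ideal membership, not that a linear pencil misses finitely many components of a closed subscheme. One needs an effective prime-avoidance or generic-point argument with explicit height bounds on the coefficients, and these heights then feed into $h(T_j)$ and hence into the constants. This is precisely the technical burden the paper sidesteps by filtering with a single hypersurface $Q_{l_1}$ instead of building a general-position family; the only auxiliary objects it needs are the canonical Chow-form generators $P_\sigma$ of $I_X$ from Section~2 and the Nullstellensatz data of Lemma~\ref{l:3.6} and Lemma~\ref{l:37}, for which it already records explicit bounds. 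If you want to keep the Chen-Ru-Yan route, this step must be made effective and its cost propagated into $c_\epsilon,c'_\epsilon$.

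Your Step~1 observation that $N$-subgeneral position plus the effective Nullstellensatz bounds $\lambda_{\mathfrak{p},Q_{i_{N+1}}}(x)$ is correct and is exactly the content of the paper's Lemma~\ref{l:37}; the reduction to common degree $d=\mathrm{lcm}(d_i)$ via $(Q_i/a_i)^{d/d_i}$ also matches the paper's reduction of Theorem~\ref{t:11} to Theorem~\ref{t:41}.
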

\begin{remark} The constants $c_\epsilon, c'_\epsilon$ will be given in \eqref{e:015} and \eqref{e:315}. They depend on $\epsilon,$ the degree of the canonical divisor class of $V$, the projective degree of $V$, the degree of $S$ (i.e $\sum_{\mathfrak{p}\in S} \deg \mathfrak{p}$), the
projective degree of $X$, the dimension of $X$, the height of $X$ and the $Q_i,$ $q$ and $N, M$  
\end{remark}

\section{Canonical polynomials from Chow forms.}
\noindent
 \textbf{2.1.} Let $X$ be a $n$-dimensional irreducible projective subvariety of  $\mathbb{P}^M$ defined over $K$ of degree $\triangle$. To $X$, we can associate, up to a constant scalar, a unique polynomial
$$F_X(u_0,\ldots,u_n)=F_X(u_{00},\ldots,u_{0M};\ldots; u_{n0},\ldots,u_{nM})$$
in $(n+1)$ blocks of variables $u_i=(u_{i0},\ldots,u_{iM}), i=0,\ldots, n,$ which is called the \emph{Chow form} of $X$, with the following properties:

$F_X$ is irreducible,

 $F_X$ is homogeneous in each block $u_i, i=0,\ldots,n,$

$F_X(u_0,\ldots,u_n)=0$ if and only if $X\cap H_{u_0}\cap\ldots\cap H_{u_n}$ contains a $\bar{K}$
-rational point,
where $H_{u_i}, i=0,\ldots, n$ are hyperplanes given by $u_i.x=u_{i0}x_0+\cdots+u_{iM}x_M=0.$ It is well-known that the degree of $F_X$ in each block $u_i$ is $\triangle$.
 \vskip0.2cm
 \noindent
\textbf{2.2.} Let $I_X$ be the homogeneous prime ideal defining X. There is a canonical way to find polynomials from the Chow form $F_X$ of $X$ which determine $X$ set theoretically.

We now recall this construction from \cite{Br} and \cite{RW}.\,
For every $0\leq j<k\leq M$, let  $A_{jk}=(a_{\alpha\beta})$ be the $(M+1)\times (M+1)$ matrix with zero entries except that $\alpha_{jk}=1$ and $\alpha_{kj}=-1$. Since a generic skew symmetric $(M+1)\times (M+1)$ matrix $S$ has the form
$$S=\sum_{j<k}s_{jk}A_{jk}$$
for indeterminants $s_{jk}$, the coefficients $u=(u_0,\ldots, u_M)$ of a generic hyperplane passing through $x$ are given by
\begin{align}
\label{e:001} u=Sx
\end{align}
Let $F_X$ be the Chow form of $X$. We note that the coefficients of $F_X$ are in $K$ since $X$ is defined over $K$. Let $S^{(i)}=(s_{jk}^{(i)}), (0\leq i\leq n)$ be generic skew symmetric $(M+1)\times (M+1)$ matrices. Let $\mathcal{M}$ be the set of all monomials in the $n+1$ blocks of variables $s^{(i)}=(s^i_{jk}:0\leq j<k\leq M+1), (0\leq i\leq n)$, which are homogeneous of degree $\triangle$ in each block.  Then by \eqref{e:001}, we may write
\begin{align}
\label{e:002} F_X(S^{(0)}x, \ldots, S^{(n)}x)=\sum_{\sigma\in\mathcal{M}} P_{\sigma}(x)\sigma.
\end{align}
Since $S^{(0)}x,\ldots,S^{(n)}x$ are generic hyperplanes through $x$, $F_X(S^{(0)}x,\ldots,S^{(n)}x)=0$ if and only if $x\in X$. On the other hand, from \eqref{e:002} we have that $F_X(S^{(0)}x,\ldots,S^{(n)}x)=0$ if and only if $P_\sigma(x)=0$ for all $\sigma\in\mathcal{M}.$
We conclude that
\emph { $I_X$ is the radical of the ideal generated by $P_\sigma, (\sigma\in\mathcal{M}).$}
We also recall the following result of Catanese \cite{Ca}
\begin{theorem} \label{theoremA}
 If $X$ is a smooth projective variety in $\mathbb{P}^M,$ then the polynomials $P_\sigma, (\sigma\in \mathcal{M})$ cut out $X$ scheme-theoretically. In other words, if $p_{\sigma,i}$ denotes the dehomogenization of $P_\sigma$ in the affine piece $X_i\not=0$ for $i=0,\ldots,n$ the ideal generated by $p_{\sigma,i}, (\sigma\in\mathcal{M})$ equals to the ideal $I_{X\cap U_i}$, where $U_i=\{U_i\not=0\}.$
 \end{theorem}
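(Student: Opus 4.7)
The plan is to upgrade the set-theoretic statement already established in 2.2 (that $F_X(S^{(0)}x,\ldots,S^{(n)}x)=0$ iff $x\in X$) to a scheme-theoretic one by a local Jacobian argument, crucially exploiting the smoothness hypothesis. Fix a closed point $x_0\in X$; since $X$ is smooth of codimension $M-n$ at $x_0$, Nakayama's lemma reduces the theorem to showing that the $K$-linear span of $\{dP_\sigma(x_0):\sigma\in\mathcal{M}\}$ inside the cotangent space at $x_0$ equals the $(M-n)$-dimensional conormal space of $X$ in $\mathbb{P}^M$ at $x_0$. Equivalently, one must prove that a tangent vector $v\in T_{x_0}\mathbb{P}^M$ lies in $T_{x_0}X$ whenever $dP_\sigma(x_0)(v)=0$ for every $\sigma$.

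To compute these differentials, I differentiate the defining identity \eqref{e:002} in the $x$-variable in the direction of $v$ and evaluate at $x_0$. The right-hand side yields $\sum_\sigma dP_\sigma(x_0)(v)\,\sigma$, while the chain rule turns the left-hand side into $\sum_{i=0}^n\nabla_{u_i}F_X(S^{(0)}x_0,\ldots,S^{(n)}x_0)\cdot(S^{(i)}v)$. The essential classical input is the factorization of the Chow form: up to a constant,
$$F_X(u_0,\ldots,u_n)=\prod_{y\in X\cap H_{u_0}\cap\cdots\cap\widehat{H_{u_i}}\cap\cdots\cap H_{u_n}}(u_i\cdot y)$$
for generic $u_j$. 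Specializing $u_j=S^{(j)}x_0$ makes $x_0$ an intersection point, and since $(S^{(i)}x_0)\cdot x_0=0$ by skew-symmetry, every term in $\nabla_{u_i}F_X$ coming from $y\ne x_0$ is annihilated by a vanishing factor. This yields $\nabla_{u_i}F_X(S^{(0)}x_0,\ldots,S^{(n)}x_0)=\lambda_i(s)\,x_0$, where the $\lambda_i(s)$ are products over the auxiliary intersection points $y\ne x_0$ and are nonzero on a Zariski-open subset of the $s$-variables (thanks to transversality of the generic hyperplanes through the smooth point $x_0$).

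Using skew-symmetry of $S^{(i)}$ to rewrite $x_0^TS^{(i)}v=-v^TS^{(i)}x_0$, one obtains the key polynomial identity
$$\sum_\sigma dP_\sigma(x_0)(v)\,\sigma\;=\;-\sum_{i=0}^n\lambda_i(s)\,\bigl(v^TS^{(i)}x_0\bigr)$$
in the $s^{(j)}_{jk}$'s. The condition that all $dP_\sigma(x_0)(v)$ vanish is thus equivalent to the vanishing of the right-hand side as a polynomial in $s$. The coefficients of the linear forms $v^TS^{(i)}x_0=\sum_{j<k}s^{(i)}_{jk}(v_jx_{0,k}-v_kx_{0,j})$ record the direction of $v$ modulo the radial line $k\cdot x_0$, while the $\lambda_i(s)$ encode, through the auxiliary intersection points, the local geometry of $X$ at $x_0$. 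A careful analysis of this dependence then forces $v\in T_{x_0}X$, completing the argument.

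The main obstacle is precisely this last step: one must rule out any "conspiracy" among the $\lambda_i(s)$ which could make the right-hand side vanish identically in $s$ even when $v\not\in T_{x_0}X$. This requires understanding how the auxiliary intersection points $y\ne x_0$ move as functions of the $s^{(j)}$-variables for $j\ne i$, and how their variation couples to the $s^{(i)}$-dependence of the forms $v^TS^{(i)}x_0$. Smoothness of $X$ at $x_0$ is the decisive geometric hypothesis, ensuring that $x_0$ is a simple point of the intersection of $X$ with a generic system of hyperplanes through $x_0$, and hence that the $\lambda_i(s)$ are generically nonzero. Once this genericity is secured, the scheme-theoretic conclusion follows from the standard Nakayama/linear-algebra reduction in the local ring $\mathcal{O}_{\mathbb{P}^M,x_0}$.
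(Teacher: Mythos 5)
The paper does not prove this statement; it is Catanese's theorem and is simply recalled from \cite{Ca}. So there is no internal proof to compare against. Evaluating your proposal on its own merits: the overall strategy (reduce via Nakayama/Jacobian criterion to a conormal-space computation, differentiate the defining identity \eqref{e:002}, exploit the classical factorization of the Chow form in one block of variables, use skew-symmetry to kill the $y=x_0$ factor and isolate the linear form $v^T S^{(i)}x_0$) is sound, and all of your intermediate identities check out.

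However, there is a genuine gap at exactly the step you yourself flag as ``the main obstacle,'' and you do not close it. After arriving at
$$\sum_\sigma dP_\sigma(x_0)(v)\,\sigma=-\sum_{i=0}^n\lambda_i(s)\,(v^TS^{(i)}x_0),$$
you must show the right-hand side is a \emph{nonzero} polynomial in $s$ whenever $v\not\in T_{x_0}X$. Your proposal asserts that smoothness secures generic nonvanishing of the $\lambda_i(s)$, and that ``once this genericity is secured, the scheme-theoretic conclusion follows.'' That implication is false as stated: nonvanishing of each $\lambda_i$ does not preclude the sum from vanishing identically. The difficulty is precisely the cancellation structure. For instance, for the conic $X=\{x_0x_2=x_1^2\}\subset\mathbb{P}^2$ at $x_0=[1:0:0]$, a direct computation of the right-hand side gives $-v_2\bigl(s^{(0)}_{01}s^{(1)}_{02}-s^{(0)}_{02}s^{(1)}_{01}\bigr)^2$: the two summands each have additional factors that cancel, and the survival of the $v_2$-term comes from a specific quadratic identity, not merely from $\lambda_0,\lambda_1\ne 0$ generically. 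To make your argument complete you would need to prove in general that $\operatorname{span}_s\bigl\{\sum_i\lambda_i(s)\,S^{(i)}x_0\bigr\}$ equals the full conormal space $(T_{x_0}X)^\perp$, which requires controlling how the auxiliary intersection points vary with $s$ and how that variation feeds back into the coefficients---this is the content you defer to ``a careful analysis'' but never supply. As it stands, the proposal is an attractive and largely correct sketch of the right idea, but it is not a proof.
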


Finally, we list some information on $P_\sigma.$ First, it is clear from the construction that the degree of $P_\sigma$ is $(n+1)\triangle$. By \eqref{e:002}, the coefficients of $P_\sigma$ are $\mathbb{Z}-$linear combinations of coefficients of the Chow form $F_X$, hence
\begin{align}\label{e:003}
e_\mathfrak{p}(P_\sigma)\geq e_\mathfrak{p}(F_X)
\end{align}

 On can also verify that the number of generating polynomials $P_\sigma$ is at most
\begin{align}\label{e:004}\begin{pmatrix}
(n+1)\triangle+\frac{M(M-1)}{2}\\
(n+1)\triangle
\end{pmatrix}^{n+1}
\end{align}

\section{Some effective results} 
Let X be a projective variety of $\mathbb{P}^M$ defined over $K$.  Let $I_X$ be the prime ideal of  $K[X_0,\ldots, X_M]$ consisting of all homogeneous polynomials vanishing identically on $X$ and $H_X$ be the Hilbert function of $X$.

  We  have a lower bound  and a upper bound for the Hilbert function, due to Chardin \cite{Ch}, Nesterenko (see \cite {Nes}) or Sombra (see \cite {Som}, Theorem 4). Notice that, Chardin, Nesterenko, Sombra state their results in more general settings but we only recall their results in a special case of a function field over algebraically closed field of characteristic 0 and of an ideal of a variety.
 \begin{lemma}[Chardin \cite{Ch}]\label{l:21} Let X be a projective subvariety of $\mathbb{P}^M$ defined over K of dimension n and degree $\triangle$. Then, for $m\geq 1,$
 $$H_X(m)\leq\triangle\begin{pmatrix}
 m+n\\n
 \end{pmatrix}.$$
 \end{lemma}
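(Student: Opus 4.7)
The plan is to prove the bound by induction on $n = \dim X$. For the base case $n = 0$, $X$ is a zero-dimensional projective scheme of degree $\Delta$; its coordinate ring $R/I_X$ (with $R = K[X_0,\ldots,X_M]$) has total $K$-length $\Delta$, its Hilbert polynomial is the constant $\Delta$, and the Hilbert function is dominated by this polynomial, so $H_X(m) \leq \Delta = \Delta\binom{m+0}{0}$ for every $m \geq 1$.

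For the inductive step I would use a generic hyperplane section. Since $K$ is infinite, I can choose a linear form $L \in R_1$ that avoids every associated prime of $I_X$ other than the irrelevant maximal ideal; such an $L$ is a non-zero-divisor on $R/I_X$. Letting $X'$ denote the scheme $\mathrm{Proj}(R/(I_X+(L)))$, B\'ezout gives $\dim X' = n-1$ and $\deg X' = \Delta$. The short exact sequence of graded $R$-modules
$$0 \to (R/I_X)(-1) \xrightarrow{\;\cdot L\;} R/I_X \to R/(I_X + (L)) \to 0,$$
read in degree $m$, produces the recursion $H_X(m) = H_X(m-1) + H_{X'}(m)$. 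Applying the inductive hypothesis to $X'$ gives $H_{X'}(m) \leq \Delta\binom{m+n-1}{n-1}$, and telescoping from $k = 1$ to $m$, together with $H_X(0) = 1$ and the hockey-stick identity $\sum_{k=0}^{m}\binom{k+n-1}{n-1} = \binom{m+n}{n}$, yields
$$H_X(m) \leq 1 + \Delta\left(\binom{m+n}{n} - 1\right) \leq \Delta\binom{m+n}{n},$$
where the last step uses $\Delta \geq 1$.

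The main technical obstacle is securing a non-zero-divisor linear form $L$ at each inductive stage, which requires control over the associated primes of $I_X$ and of the subsequent hyperplane sections. The cleanest way to bypass this is to first reduce to the case of irreducible $X$, exploiting the additivity of both the Hilbert function and the degree over irreducible components so that the bound, once established irreducible by irreducible, carries over to the whole $X$; in the irreducible case $I_X$ is prime and any $L \notin I_X$ works. A secondary subtlety is that the scheme-theoretic intersection $X'$ may be non-reduced even when $X$ is, but this is harmless since the argument uses only the Hilbert function and the degree of $\mathrm{Proj}(R/(I_X+(L)))$, not of its reduction.
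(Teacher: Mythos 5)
Your inductive step has a genuine gap. The short exact sequence
$$0 \to (R/I_X)(-1) \xrightarrow{\;\cdot L\;} R/I_X \to R/(I_X+(L)) \to 0$$
identifies $H_X(m)-H_X(m-1)$ with $\dim_K\bigl(R/(I_X+(L))\bigr)_m$, the Hilbert function of the \emph{graded ring} $R/(I_X+(L))$, not of the projective scheme $X'=\mathrm{Proj}\bigl(R/(I_X+(L))\bigr)$. The ideal $I_X+(L)$ is in general not saturated, and $\mathrm{Proj}$ only sees its saturation, whose Hilbert function can be strictly smaller in low degrees. Your inductive hypothesis (even if restated for arbitrary projective subschemes) bounds only the saturated Hilbert function, so it cannot legitimately be applied to $\dim_K\bigl(R/(I_X+(L))\bigr)_m$, which is the quantity the exact sequence actually produces. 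Reducing first to irreducible $X$ does not help: the problem reappears at the very first hyperplane section, and the bound cannot hold for arbitrary non-saturated homogeneous ideals of the given dimension and degree (e.g. $J=(x_0^2,x_0x_1)\subset K[x_0,x_1]$ has $\mathrm{Proj}$ a single point of degree $1$ but $\dim_K(R/J)_1=2$).

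Here is a concrete counterexample to the step you need. Let $X\subset\mathbb{P}^3$ be the smooth rational quartic curve parametrized by $[s^4:s^3t:st^3:t^4]$, so $n=1$, $\Delta=4$, and $I_X$ is prime with $(I_X)_2$ spanned by the single quadric $X_0X_3-X_1X_2$. One computes $H_X(0),H_X(1),H_X(2),H_X(3)=1,4,9,13$, so for a generic linear form $L$,
$$\dim_K\bigl(R/(I_X+(L))\bigr)_2 \;=\; H_X(2)-H_X(1)\;=\;5\;>\;4\;=\;\Delta\binom{2+0}{0}.$$
(The four points $X\cap\{L=0\}$ have saturated Hilbert function $1,3,4,4,\ldots$; the extra $+1$ at $m=2$ lives in the saturation defect of $I_X+(L)$, reflecting that $R/I_X$ is not Cohen--Macaulay.) Thus the inequality $H_{X'}(m)\leq\Delta\binom{m+n-1}{n-1}$ fails for the quantity you feed into the telescoping sum, and the induction breaks down. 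Note finally that the paper does not prove this lemma at all; it cites Chardin, whose argument must --- and does --- deal with exactly this saturation phenomenon, which is why it is more delicate than a bare hyperplane-section induction. To salvage your route you would need a quantitative control of $\dim_K\bigl((I_X+(L))^{\mathrm{sat}}/(I_X+(L))\bigr)_m$ or an inductive invariant stable under taking the (non-saturated) quotient by a generic linear form.
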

\begin{lemma}[Nesterenko \cite {Nes}, Sombra \cite {Som}, Theorem 4]\label{l:22}  Let X be a projective subvariety of $\mathbb{P}^M$ defined over K of dimension n and degree $\triangle$. Then
$$H_X(m)\geq\begin{pmatrix}m+n+1\\n+1
\end{pmatrix}-\begin{pmatrix}
m-\triangle+n+1\\n+1
\end{pmatrix}$$
for $m\geq 1.$
\end{lemma}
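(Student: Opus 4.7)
The plan is to prove the lower bound by induction on $n = \dim X$, using a generic hyperplane section to step the dimension down by one and a telescoping (``hockey-stick'') identity to reassemble the final inequality. Since Hilbert functions are invariant under extension of the base field, I would work over $\bar K$ throughout, replacing $X$ by the corresponding equidimensional closed subscheme of $\mathbb{P}^M_{\bar K}$.

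For the base case $n = 0$, the scheme $X$ consists of $\triangle$ closed points (counted with multiplicity). I would choose a linear form $\ell$ taking pairwise distinct values at these points and a linear form $\ell_0$ nowhere vanishing on $X$; a Vandermonde argument applied to the evaluations of $\ell^k \ell_0^{m-k}$, $k=0,\ldots,m$, shows that the evaluation map $K[X_0,\ldots,X_M]_m \to \bar K^{\triangle}$ has rank at least $\min(m+1,\triangle)$. With the standard convention $\binom{a}{b}=0$ for $a<b$, this equals exactly $\binom{m+1}{1} - \binom{m-\triangle+1}{1}$, matching the claimed bound in dimension zero.

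For the inductive step with $n \geq 1$, set $R = K[X_0,\ldots,X_M]$ and assume the bound for dimension $n-1$. The key step is to choose a generic linear form $L \in R_1$ which is a non-zero-divisor in $R/I_X$ and for which the scheme-theoretic intersection $X' = X \cap \{L = 0\}$ is equidimensional of dimension $n-1$ and degree $\triangle$ (the latter by B\'ezout). The short exact sequence
\[
0 \longrightarrow (R/I_X)(-1) \xrightarrow{\;\cdot L\;} R/I_X \longrightarrow R/(I_X + (L)) \longrightarrow 0
\]
yields the recursion $H_X(m) - H_X(m-1) = H_{X'}(m)$. Applying the induction hypothesis to $X'$ and summing from $k=0$ up to $m$, then invoking $\sum_{k=0}^{m}\binom{k+n}{n} = \binom{m+n+1}{n+1}$ together with its shifted counterpart $\sum_{k=0}^{m}\binom{k-\triangle+n}{n} = \binom{m-\triangle+n+1}{n+1}$, produces the claimed bound.

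The main obstacle I foresee is precisely the choice of $L$ in the inductive step: one must simultaneously avoid every associated prime of $I_X$ (so that $L$ acts as a non-zero-divisor on $R/I_X$) and preserve the total degree under intersection (so that B\'ezout applies without multiplicity collapse). The infiniteness of $K$, which follows from the presence of the algebraically closed field $k$ of characteristic zero inside it, suffices to guarantee such an $L$ via a standard prime-avoidance argument. A secondary subtlety is that $X \cap \{L=0\}$ will typically fail to be reduced even when $X$ is a variety, so both the statement and the induction must be set up in the category of equidimensional closed subschemes rather than varieties alone; this enlarged generality is already implicit in the Nesterenko--Sombra formulations cited.
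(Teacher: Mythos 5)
The paper does not prove this lemma; it cites it directly from Nesterenko and Sombra (Theorem~4), so there is no in-paper proof to compare against. Your induction-on-dimension via a generic hyperplane section is the standard route to this bound and is essentially sound, including the observations that one must work with equidimensional (possibly non-reduced) schemes and that the infinitude of $K$ furnishes a non-zero-divisor linear form by prime avoidance. The telescoping identity and the sign of the inequality in the inductive step both check out; note also that $I_X+(L)$ need not be saturated, so strictly one gets $H_X(m)-H_X(m-1)=\dim R_m/(I_X+(L))_m\geq H_{X'}(m)$ rather than equality, but the inequality goes the right way.

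The genuine gap is in the base case. Your Vandermonde argument on the evaluations $\ell^k\ell_0^{m-k}$ establishes rank $\geq\min(m+1,\triangle)$ only when the $\triangle$ points are distinct, i.e.\ when the zero-dimensional scheme is reduced. But the very reason you enlarge the category to equidimensional closed subschemes is that generic hyperplane sections of a variety are typically non-reduced; so the induction's terminal case is a length-$\triangle$ zero-dimensional scheme with multiplicities, and there your argument does not apply as written. The fix is to replace the Vandermonde computation by the standard persistence argument: take a linear non-zero-divisor $L$ (prime avoidance again), observe that $H_X(m)-H_X(m-1)$ is the Hilbert function of the Artinian graded quotient $R/(I_X+(L))$, which is generated in degree one; hence once this difference vanishes it vanishes for all larger degrees, so $H_X$ is strictly increasing until it reaches its stable value $\triangle$, giving $H_X(m)\geq\min(m+1,\triangle)$ in full generality.
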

We will derive the following result from Lemma \ref{l:21} and Lemma \ref{l:22}.
\begin{proposition}\label{p:23}Let X be a projective subvariety of $\mathbb{P}^M$ defined over K of dimension n and degree $\triangle$. Let $d\in\mathbb{N}^*$ be a given constant. Then, for any given $\epsilon\geq 0,$ there exists an effectively computable constant $a_{\epsilon}$ depending only on $\epsilon, n, \triangle, d$ such that:
$$\dfrac{m(H_X(m)+1)}{\sum_{i=1}^{m/d-1}H_X(id)}\leq d(n+1+\epsilon),$$
for all $ m\in\mathbb{N}, m\geq a_{\epsilon},d|m.$
\end{proposition}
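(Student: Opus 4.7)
The plan is to compare the leading asymptotics of the two sides. Using the effective bounds of Lemmas~\ref{l:21} and~\ref{l:22}, both the numerator $m(H_X(m)+1)$ and the denominator $\sum_{i=1}^{m/d-1} H_X(id)$ are bounded by polynomials in $m$ of degree $n+1$, with leading coefficients $\triangle/n!$ and $\triangle/(n!(n+1)d)$ respectively, so their ratio tends to $d(n+1)$ as $m\to\infty$. The content of the proposition is merely that this convergence is effective: one must write down $a_\epsilon$ in closed form in terms of $\epsilon, n, \triangle, d$.

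\textbf{Numerator.} Expanding the right-hand side of Lemma~\ref{l:21} as a polynomial in $m$ gives
\begin{equation*}
m(H_X(m)+1) \;\leq\; m\,\triangle\binom{m+n}{n} + m \;\leq\; \frac{\triangle}{n!}\,m^{n+1} + C_1(n,\triangle)\,m^{n},
\end{equation*}
where $C_1$ is an explicit constant depending only on $n$ and $\triangle$ obtained by expanding the falling factorial $(m+n)(m+n-1)\cdots(m+1)$.

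\textbf{Denominator.} Write $t=m/d$. Setting $P(N)=\binom{N+n+1}{n+1}$ and Taylor-expanding $P(N-\triangle)$ around $N=id$ gives, for $id\geq\triangle$,
\begin{equation*}
H_X(id) \;\geq\; P(id)-P(id-\triangle) \;\geq\; \frac{\triangle}{n!}(id)^{n} - C_2(n,\triangle)(id)^{n-1},
\end{equation*}
with $C_2$ read off from the next-to-leading coefficient of the expansion; for $id<\triangle$ one simply discards the term using $H_X(id)\geq 0$, which loses only a bounded constant depending on $n,\triangle,d$. Summing over $i=1,\dots,t-1$ and using Faulhaber's formula $\sum_{i=1}^{t-1} i^k = \frac{t^{k+1}}{k+1}+O(t^k)$ (with explicit constants), then substituting $t=m/d$, produces
\begin{equation*}
\sum_{i=1}^{m/d-1} H_X(id) \;\geq\; \frac{\triangle}{n!(n+1)d}\,m^{n+1} - C_3(n,\triangle,d)\,m^{n},
\end{equation*}
for an effectively computable $C_3$.

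Dividing the two bounds then yields
\begin{equation*}
\frac{m(H_X(m)+1)}{\sum_{i=1}^{m/d-1}H_X(id)} \;\leq\; d(n+1)\cdot\frac{1+A/m}{1-B/m},
\end{equation*}
with $A=A(n,\triangle)$ and $B=B(n,\triangle,d)$ obtained from $C_1$ and $C_3$. Taking $a_\epsilon$ to be the smallest integer for which the factor $(1+A/m)/(1-B/m)$ is at most $(n+1+\epsilon)/(n+1)$ for all $m\geq a_\epsilon$ reduces to an elementary one-variable inequality of the form $m \geq \Theta((A+B)(n+1)/\epsilon)$, giving the desired effective $a_\epsilon$ depending only on $\epsilon,n,\triangle,d$. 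The only obstacle is purely bookkeeping: one must expand the binomials carefully enough to confirm that $C_1,C_2,C_3$ depend only on the listed parameters. This is routine polynomial algebra, slightly more delicate in the denominator because of the sum over $i$, but presents no conceptual difficulty; it is exactly what distinguishes the effective statement from the trivial limit assertion $\lim_{m\to\infty} m(H_X(m)+1)/\sum_{i=1}^{m/d-1} H_X(id) = d(n+1)$.
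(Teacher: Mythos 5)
Your proposal is correct and follows essentially the same route as the paper: both use Lemma~\ref{l:21} (Chardin) to control the numerator and Lemma~\ref{l:22} (Nesterenko--Sombra) for the denominator, expand the binomials into polynomials of degree $n+1$ in $m$, match the leading coefficients $\triangle/n!$ and $\triangle/(n!(n+1)d)$ so that the ratio approaches $d(n+1)$, and then choose $a_\epsilon$ to absorb the lower-order error terms. The paper simply carries out the two bookkeeping steps you describe abstractly a bit more explicitly --- it introduces the polynomial $G(z)=\binom{z+n+1}{n+1}-\binom{z-\triangle+n+1}{n+1}$, estimates each coefficient $c_k$ of $G$ by $(n+1)^2(2\triangle)^{n+1}$, and proves an explicit-constant version of Faulhaber (Lemma~\ref{l:24}) in place of your ``$O(t^k)$ with explicit constants'' --- but the underlying comparison is identical.
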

To prove the above proposition, we need a simple lemma from elementary mathematics.
\begin{lemma}\label{l:24}
Let $k, l\in\mathbb{N}^*$, denote by
$$S_k(l):=1^k+2^k+\cdots+l^k.$$
Then 
$$\dfrac{(l+1)^{k+1}}{k+1}\geq S_k(l)\geq \dfrac{(l+1)^{k+1}}{k+1}-\dfrac{(l+1)^k}{2}.$$
\end{lemma}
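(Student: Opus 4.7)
The plan is to bracket $S_k(l)$ by comparing it with the integral $\int_0^{l+1} x^k\,dx = \frac{(l+1)^{k+1}}{k+1}$. Monotonicity of $x \mapsto x^k$ on $[0,\infty)$ will yield the upper bound, and convexity of the same function (valid because $k \geq 1$) will yield the lower bound.

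For the upper bound, I would use $i^k \leq \int_i^{i+1} x^k\,dx$ for each $i \geq 1$, which holds because $x^k$ is nondecreasing on $[0,\infty)$. Summing from $i=1$ to $l$ then gives
$$S_k(l) \;\leq\; \int_1^{l+1} x^k\,dx \;=\; \frac{(l+1)^{k+1}-1}{k+1} \;\leq\; \frac{(l+1)^{k+1}}{k+1},$$
which is exactly the claimed upper estimate.

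For the lower bound, since $f(x)=x^k$ is convex on $[0,\infty)$ whenever $k \geq 1$, the trapezoid inequality provides $\int_i^{i+1} x^k\,dx \leq \frac{i^k+(i+1)^k}{2}$ for every $i \geq 0$. Summing from $i=0$ to $l$, using $0^k=0$ and observing that the interior values $i^k$ for $1\leq i\leq l$ each appear with total weight one while $(l+1)^k$ appears with weight $\tfrac12$, yields
$$\frac{(l+1)^{k+1}}{k+1} \;=\; \int_0^{l+1} x^k\,dx \;\leq\; S_k(l) + \frac{(l+1)^k}{2},$$
which rearranges into the required lower bound.

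I do not anticipate any real obstacle: the lemma is just the standard rectangle-versus-trapezoid comparison for a convex nondecreasing function. The one point worth recording is that the hypothesis $k\in\mathbf{N}^*$ (i.e.\ $k\geq 1$) is essential, since it supplies both the monotonicity invoked in the upper bound and the convexity invoked in the lower bound. As an alternative, one could run induction on $l$ with base case $l=1$ (a direct check); the inductive step reduces to exactly the single-interval trapezoid estimate $\int_{l+1}^{l+2} x^k\,dx \leq \tfrac12\bigl((l+1)^k + (l+2)^k\bigr)$, so the underlying ingredient is unchanged.
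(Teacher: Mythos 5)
Your proof is correct, and it takes a genuinely different route from the paper's. The paper argues algebraically: it expands $(m+1)^{k+1}-m^{k+1}$ by the binomial theorem, telescopes to get the exact identity $(l+1)^{k+1}-1=(k+1)S_k(l)+\binom{k+1}{2}S_{k-1}(l)+\cdots+(k+1)S_1(l)+l$, reads off the upper bound from positivity of the extra terms, and then obtains the lower bound by comparing this identity with the analogous one at exponent $k$ via the coefficient inequality $\binom{k+1}{h}\leq\tfrac{k+1}{2}\binom{k}{h-1}$. You instead bracket $S_k(l)$ against $\int_0^{l+1}x^k\,dx$, deriving the upper bound from monotonicity (left-endpoint rectangles) and the lower bound from convexity (trapezoid rule); your bookkeeping of the weights when summing the trapezoid estimate over $i=0,\dots,l$ is correct, and the observation that $k\geq 1$ is exactly what supplies both monotonicity and convexity is the right thing to flag. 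The integral-comparison route is shorter and more transparent; the paper's route is purely algebraic and avoids calculus, which may be the author's reason for preferring it in a paper otherwise about algebraic function fields, but both are sound.
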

\begin{proof} For $k=1$, it is obvious. Thus, we can assume that $k>1$.
We use Newton's polynomial to prove the left hand-side inequality. We have:
$$(m+1)^{k+1}-m^{k+1}=(k+1)m^k+\binom{k+1}{2}m^{k-1}+\cdots+(k+1)m+1.$$
Hence,
\begin{align*}
\sum_{m=1}^l\left((m+1)^{k+1}-m^{k+1}\right)\\
=\sum_{m=1}^l\left(
(k+1)m^k+\binom{k+1}{2}
m^{k-1}+\cdots+(k+1)
m+1\right).
\end{align*}

Therefore, we have
\begin{align}\label{e:21}(l+1)^{k+1}-1=(k+1)S_k(l)+\binom{k+1}{2}
S_{k-1}(l)
+\cdots+(k+1)S_1(l)+l,
\end{align}
which implies the left hand-side inequality.

We replace $k+1$ by $k$ in \eqref{e:21}
\begin{align}\label{e:22}
(l+1)^k-1=kS_{k-1}(l)+\binom{k}{2}
S_{k-2}(l)+\cdots+kS_1(l)+l.
\end{align}
It is easy to see that for $k\geq h\geq 2$, we have
$$\binom{k+1}{h}
\leq \dfrac{k+1}{2}\binom {k}{h-1}
.$$
Combining with \eqref{e:22}, we have
\begin{align}\label{e:23}&\binom{k+1}{2}S_{k-1}(l)+\cdots+(k+1)S_1(l)+l\\
&\leq\dfrac{k+1}{2}\left(kS_{k-1}(l)+\binom{k}{2}
S_{k-2}(l)+\cdots+kS_1(l)+l\right)=\dfrac{k+1}{2}((l+1)^k-1)\notag.
\end{align}
From \eqref{e:21} and \eqref{e:23}, we have
$$(l+1)^{k+1}-1\leq(k+1)S_k(l)+\dfrac{k+1}{2}((l+1)^k-1),$$
which implies the right hand-side inequality.
\end{proof}

We now prove Proposition \ref{p:23}.
 \begin{proof}
 Let $n,\triangle\in\mathbb{N}^*$ be given constants, for each $z\in\mathbb{N}, z\geq 1$, we denote by
$$G(z):=\binom
{z+n+1}{n+1}-\binom{
z-\triangle+n+1}{n+1}.$$
Then, $G(z)$ is a polynomial with leading coefficient $\dfrac{\triangle}{n!},$
$$G(z)=\dfrac{\triangle z^{n}}{n!}+c_1z^{n-1}+\cdots+c_{n},$$
where $c_i, 1\leq i\leq n$ are constants depending on $\triangle,n.$ 

For each $t\in\mathbb{N}^*,$ denote by $$T(t):=\sum_{i=1}^tG(id).$$
Then we have
$$T(t)=\dfrac{\triangle d^n}{n!}S_{n}(t)+c_1d^{n-1}S_{n-1}(t)+\cdots+ c_{n-1}dS_1(t)+tc_{n}.$$
Applying Lemma \ref{l:24}, we have 
\begin{align*}T(t)\geq \dfrac{\triangle d^n}{n!}\left( \dfrac{(t+1)^{n+1}}{n+1}-\dfrac{(t+1)^{n}}{2}\right)
-\dfrac{d^{n-1}|c_1|}{n}(t+1)^n\\
-\cdots-\dfrac{d|c_{n-1}|}{2}(t+1)^2-|c_n|t\\
=\dfrac{\triangle d^n}{(n+1)!}(t+1)^{n+1}-\left(\dfrac{\triangle d^n}{2n!}+\dfrac{d^{n-1}|c_1|}{n}\right)(t+1)^n\\
-\dfrac{d^{n-2}|c_2|}{n-1}(t+1)^{n-1}-\cdots- \dfrac{d|c_{n-1}|}{2}(t+1)^2-|c_n|t.
\end{align*}
Let $m\in \mathbb{N}^*, d|m$. Then, we have
\begin{align}\label{e:005}
dT\left(\dfrac{m}{d}-1\right)\geq \dfrac{\triangle}{(n+1)!}m^{n+1}-\left(\dfrac{\triangle d}{2n!}+\dfrac{|c_1|}{n}\right)m^n\\
\qquad\qquad\qquad\qquad-\cdots- \dfrac{|c_{n-1}|}{2}m^2-|c_n|(m-d).\notag
\end{align}
Now, we estimate $c_i, 1\leq i\leq n.$ Set $$f(z)=(z+n+1)\ldots (z+1)=z^{n+1}+a_1z^n+\ldots+ a_nz+a_{n+1}.$$
Thus, we have
\begin{align*}
a_1=1+\ldots+(n+1)=\dfrac{(n+1)(n+2)}{2},\\
1+a_1+\ldots+a_{n+1}=(n+2)!
\end{align*}
Since $a_i>0, 1\leq i\leq n+1,$ then we have $a_i<(n+2)!.$ Since $G(z)=\frac{f(z)-f(z-\triangle)}{(n+1)!}$, we have
$$c_k=\dfrac{f^{(n-k)}(0)-f^{(n-k)}(-\triangle)}{(n+1)!(n-k)!}, 1\leq k\leq n.$$
Therefore,
\begin{align*}
c_1=-\dfrac{1}{(n+1)!}\left(\binom{n+1}{n-1}(-\triangle)^2+a_1\binom{n}{n-1}(-\triangle)\right)
=\dfrac{\triangle (n+2-\triangle)}{2(n-1)!},\\
c_k=-\dfrac{1}{(n+1)!}\left(\binom{n+1}{n-k}(-\triangle)^{k+1}+\sum_{1\leq i\leq k}a_i\binom{n+1-i}{n-k}(-\triangle)^{k+1-i}\right),\\
 (2\leq k\leq n).
\end{align*}
Therefore, for all $2\leq k\leq n$, we have
\begin{align*}
|c_k|\leq \dfrac{1}{(n+1)!} (k+1)\triangle^{k+1}\cdot\max_ia_i\cdot\max_{1\leq i<j\leq n}\binom{i}{j}
\leq (n+1)^2(2\triangle)^{n+1}.
\end{align*}
Combining with \eqref{e:005}, we have
\begin{align}\label{e:24}
dT\left(\dfrac{m}{d}-1\right)\geq \dfrac{\triangle}{(n+1)!}m^{n+1}-\dfrac{\triangle d+\triangle |n+2-\triangle|}{2n!}m^n\\
\qquad\qquad\qquad\qquad-\cdots- (n+1)^2(2\triangle)^{n+1}(m^{n-1}+\ldots+m^2+m-d)\notag\\
\geq \dfrac{\triangle}{(n+1)!}m^{n+1}-\dfrac{\triangle d+\triangle |n+2-\triangle|}{2n!}m^n- (n+1)^3(2\triangle)^{n+1}m^{n-1}.\notag
\end{align}

 It follows from Lemma \ref{l:22} that
\begin{align}\label{e:25}\sum_{i=1}^{m/d-1}H_X(id)\geq \sum_{i=1}^{m/d-1}G(id)=T(m/d-1).\end{align}
Lemma \ref{l:21} implies that
\begin{align}\label{e:26}H_X(m)\leq\triangle\binom{m+n}{n}
< \triangle\dfrac{(m+n)^n}{n!}.\end{align}

Choose a constant $c>0$ such that the right  hand-side of \eqref{e:24} takes positive value for all $m>c$. Then, from \eqref{e:26}, \eqref{e:25}, \eqref{e:24}, for such $m$ we have
\begin{align*}\dfrac{m(H_X(m)+1)}{d\sum\limits_{i=1}^{m/d-1}H_X(id)}&\leq
\dfrac{\frac{\triangle. m(m+n)^n}{n!}}{ \frac{\triangle}{(n+1)!}m^{n+1}-\frac{\triangle d+\triangle |n+2-\triangle|}{2n!}m^n- (n+1)^3(2\triangle)^{n+1}m^{n-1}}
\end{align*}
From the above inequality, it is easy to see that for each given $\epsilon>0,$ there exists $a_\epsilon$ satisfying Proposition \ref{p:23}.
\end{proof}

We recall a simple lemma by Masser and Wustholz from \cite{MW} on the solutions of a system of linear equations over $K$ which is modified by Ru and Wang (\cite{RW}, Lemma 11).

For positive integers $p, q$ and reals $\nu_\mathfrak{p}$ for all $\mathfrak{p}\in M_K$, we consider the system
\begin{align}\label{e:005}
a_{j1}x_1+\cdots+a_{jp}x_p=0, 1\leq j\leq q 
\end{align}
where $a_{ij}\in K$ not all zero $(1\leq i\leq p, 1\leq j \leq q)$ and $\ord_\mathfrak{p}(a_{ij})\geq \nu_\mathfrak{p}$ for each $p\in M_K.$
\begin{lemma}\label{l:35}
For an integer $t$ with $1\leq t\leq p$ suppose that the system \eqref{e:005} has a solution with $x_1,\ldots, x_p\in K$ such that $x_t\not=0.$ Then, there exists a positive integer $l\leq p-1$ such that the system \eqref{e:005} has a solution $x_1,\ldots,x_p\in K$ with $x_t\not=0$ and $\ord_\mathfrak{p}(x_i)\geq l \nu_\mathfrak{p}$ for each $1\leq i\leq p$ and each $\mathfrak{p}\in M_K. $
\end{lemma}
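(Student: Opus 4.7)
The plan is a direct application of Cramer's rule. Let $A=(a_{ji})$ denote the $q\times p$ coefficient matrix and set $r:=\rank A$. Since some $a_{ij}\neq 0$ we have $r\geq 1$, and since a solution with $x_t\neq 0$ exists, $A$ cannot have full column rank, so $r\leq p-1$. I will show that $l:=r$ works by producing a solution whose every coordinate is, up to sign, either $0$ or the determinant of an $r\times r$ submatrix of $A$.

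First, I would pick a maximal linearly independent set of rows of $A$; after relabeling, assume these are rows $1,\dots,r$. Any vector annihilating these $r$ rows also annihilates all $q$ rows, so it suffices to solve the first $r$ equations. Second, the hypothesis that some solution has $x_t\neq 0$ yields $a_t=-\sum_{i\neq t}(x_i/x_t)\,a_i$, i.e.\ column $t$ of $A$ lies in the span of the remaining columns. Consequently, the matrix $A'$ obtained by deleting column $t$ still has rank $r$, so I can choose indices $i_1,\dots,i_r\in\{1,\dots,p\}\setminus\{t\}$ such that the $r\times r$ submatrix $B=(a_{j,i_s})_{1\leq j,s\leq r}$ is nonsingular.

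Third, I would define the candidate solution by setting $x_j=0$ for $j\notin\{i_1,\dots,i_r,t\}$ and $x_t=\det B$. The first $r$ equations then reduce to $B\,(x_{i_1},\dots,x_{i_r})^{T}=-(\det B)(a_{1,t},\dots,a_{r,t})^{T}$, whose Cramer solution is $x_{i_s}=-\det B_s$, where $B_s$ is obtained from $B$ by replacing its $s$-th column with $(a_{1,t},\dots,a_{r,t})^{T}$. Each nonzero $x_i$ is thus, up to sign, the determinant of some $r\times r$ submatrix of $A$, and $x_t=\det B\neq 0$ by construction. Since rows $1,\dots,r$ span the row space of $A$, all $q$ equations are satisfied.

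Finally, any $r\times r$ determinant in entries of $A$ expands as a sum of $r!$ products of $r$ such entries; as each factor has $\ord_\mathfrak{p}\geq\nu_\mathfrak{p}$, each product has $\ord_\mathfrak{p}\geq r\nu_\mathfrak{p}$, and hence so does the sum (the bound is vacuous for the zero entries). Therefore $\ord_\mathfrak{p}(x_i)\geq r\nu_\mathfrak{p}=l\nu_\mathfrak{p}$ for every $i$ and every $\mathfrak{p}\in M_K$, as required. The only delicate step is the second one---using the hypothesis $x_t\neq 0$ to ensure that a nonsingular $r\times r$ block can be chosen from columns \emph{other than} $t$; everything else is routine linear algebra.
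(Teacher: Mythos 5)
Your proof is correct, and the approach is the standard one. Note that the paper does not actually prove this lemma---it is quoted from Ru and Wang (\cite{RW}, Lemma~11), which in turn adapts Masser--Wustholz \cite{MW}---but the Cramer's-rule argument you give (relabel rows so that the first $r=\rank A$ are independent, use the hypothesis $x_t\neq 0$ to see that column $t$ lies in the span of the remaining columns and hence that a nonsingular $r\times r$ block $B$ can be chosen avoiding column $t$, solve for the nonzero coordinates as $\pm$ minors of $A$, and bound $\ord_\mathfrak{p}$ of each minor by $r\nu_\mathfrak{p}$ via the Leibniz expansion and ultrametric inequality) is precisely the argument underlying those references, with $l=r\leq p-1$.
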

We also recall the following theorem due to Hermann \cite{He}, Seidenberg \cite{Se}, and Renschuch \cite{Re}. We refer to Aschenbrenner \cite{As} for more discussion.
\begin{theorem}
Let $P_1,\ldots, P_l\in K[X_1,\ldots,X_M]$ be polynomials of total degree at most $d$. If $Q$ is in the ideal generated by $P_1,\ldots,P_l$, then
$$Q=A_1P_1+\ldots+ A_lP_l$$
for certain $A_1,\ldots,A_l\in K[X_1,\ldots,X_M]$ whose degrees are bounded by $(2d)^{2^M}.$
\end{theorem}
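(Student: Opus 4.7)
The plan is to prove the bound $(2d)^{2^M}$ by induction on the number $M$ of variables. The base case $M=1$ is immediate: $K[X_1]$ is a principal ideal domain, and the extended Euclidean algorithm applied to $P_1,\dots,P_l$ produces cofactors of degree bounded linearly in $d$. For the inductive step, the strategy is to reduce an ideal membership problem in $M$ variables to one in $M-1$ variables at the cost of squaring the degree bound; iterating this reduction $M$ times explains the doubly exponential $2^M$ in the exponent.

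For the inductive step, a generic linear change of coordinates (which preserves total degrees) arranges that, after rescaling by elements of $K$, each $P_i$ is monic in $X_M$ of full degree $\deg P_i$ when regarded as an element of $R[X_M]$ with $R:=K[X_1,\dots,X_{M-1}]$. Polynomial division in $R[X_M]$ against $P_1$ writes $Q = B_1P_1 + \widetilde Q$ with $\deg_{X_M}\widetilde Q < d$ and with $B_1$ of controlled total degree. The key step is then a resultant/elimination argument: the pairwise resultants $\mathrm{Res}_{X_M}(P_1,P_i) \in R$ have total degree at most $d^2$ in $X_1,\dots,X_{M-1}$, and classical elimination theory shows that the coefficients of $\widetilde Q$, viewed as a polynomial in $X_M$, lie in the ideal $J\subset R$ generated by these resultants together with the $X_M$-coefficients of the $P_i$. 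Applying the inductive hypothesis to this auxiliary membership problem in $M-1$ variables, with degree bound replaced by $d^2$, yields cofactors in $R$ of degree at most $(2d^2)^{2^{M-1}}\le (2d)^{2^M}$; pulling these back through explicit Cramer-type formulas involving the resultants produces the desired cofactors $A_1,\dots,A_l$ in $K[X_1,\dots,X_M]$.

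The main obstacle is the careful degree accounting in the lifting step: one must track precisely how the cofactors expressing membership in $J$ translate into cofactors expressing membership in $(P_1,\dots,P_l)$, and verify that the polynomial combinations introduced by the resultant identities do not blow the bound beyond $(2d)^{2^M}$. A secondary technical difficulty is handling non-generic situations, when the generic linear change of coordinates fails to make each $P_i$ monic of full $X_M$-degree, or when the ideal is not unmixed. These are classically resolved via primary decomposition (as in Hermann's original treatment), or in a modern presentation by invoking effective Gröbner basis theory, where the degrees of Gröbner basis elements obey the analogous $(2d)^{2^M}$ bound and ideal membership is decided by polynomial division against the basis. The Mayr--Meyer examples show that a doubly exponential bound is in principle unavoidable, so the squaring step in the induction is essential rather than an artifact of the argument.
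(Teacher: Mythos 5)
The paper does not prove this theorem. It is stated in Section~3 as a classical effective-algebra result due to Hermann, Seidenberg and Renschuch, with a pointer to Aschenbrenner \cite{As} for discussion, and no argument is supplied; there is therefore no proof in the paper to compare against, and the reader is expected only to accept the statement.

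Evaluated on its own, your outline has the right overall shape---Hermann's induction on the number of variables, with a degree-squaring step accounting for the exponent $2^M$ (and the arithmetic $(2d^2)^{2^{M-1}}\le (2d)^{2^M}$ is correct)---but the central elimination step is not right as written. After making the $P_i$ monic in $X_M$, you assert that the $R$-coefficients of $\widetilde Q\in R[X_M]$ lie in the ideal $J\subset R$ generated by the pairwise resultants $\mathrm{Res}_{X_M}(P_1,P_i)$ together with the $X_M$-coefficients of the $P_i$. On one reading this is vacuous (once the $P_i$ are monic, $1$ is among the $X_M$-coefficients, so $J=R$); on the reading that excludes leading coefficients it is not a standard elimination fact and I do not see a justification. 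The classical argument instead expands $Q=\sum A_iP_i$ as a linear system over $K$ in the unknown coefficients of the $A_i$ and controls degree growth via Gaussian elimination and Cramer-type determinants, or, in structured form, works with the free $R$-module $\bigoplus_{j<\deg_{X_M}P_1}R\,X_M^j$ and an $R$-module membership problem. The lifting step and degree accounting you yourself flag as the ``main obstacle,'' together with the non-generic cases you defer to primary decomposition or effective Gr\"obner bases, are precisely where Hermann's 1926 argument was incomplete and had to be repaired by Seidenberg and Renschuch---they are the real content of the theorem, not deferable technicalities. A smaller point: your base-case bound (and the paper's statement itself) suppresses the dependence on $\deg Q$; since $\deg A_i$ must be at least roughly $\deg Q-d$, the honest bound is $\deg Q$ plus a function of $d$ and $M$, which is harmless in the paper's applications because $\deg Q$ is bounded there, but it should be made explicit. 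In short, this is a plausible roadmap, not a proof.
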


Now, we recall the following version of an effective Hilbert's Nullstellensatz (See  \cite{Je},\cite{Ko}, also \cite{RW}, Theorem 12).
\begin{theorem}{\label{theoremB}}\,Let $P_0,\ldots, P_l$ be homogeneous polynomials in $K[X_0,\ldots,X_M]$ of total degree at most $d$ such that $P_0$ vanishes at all common zeros (if any) of $P_1,\ldots,P_l$ in $\bar{K}^M$. Then there exist a positive integer $u\leq (4d)^{M+2}$ and homogeneous polynomials $A_1,\ldots,A_l$ in $K[X_0,\ldots,X_M]$ of total degree at most $(4d)^{M+2}$, such that
$$aP_0^u=A_1P_1+\cdots+A_lP_l$$
for some non-zero element a of $ K.$ Furthermore, there exists a positive integer 
$$l_0\leq l(4(4d)^{M+2})^M$$
such that
$$\min\{\ord_\mathfrak{p}(\alpha),e_\mathfrak{p}(A_1),\ldots,e_\mathfrak{p}(A_l)\}\geq l_0\cdot
\min_{0\leq i\leq l}\{e_\mathfrak{p}(P_i)\}$$
for each $\mathfrak{p}\in M_K.$
\end{theorem}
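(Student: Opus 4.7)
My plan is to separate the statement into two essentially independent parts: the existence of the polynomial identity with the stated degree bounds, and the $\mathfrak{p}$-adic control on its coefficients.

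For the identity, I would invoke the classical effective Nullstellensatz of Koll\'ar \cite{Ko} (with the sharpening by Jelonek \cite{Je}) over the algebraically closed field $\bar K$ of characteristic zero. This yields a positive integer $u \le (4d)^{M+2}$ together with homogeneous polynomials $\tilde A_1,\ldots,\tilde A_l \in \bar K[X_0,\ldots,X_M]$ of degree at most $(4d)^{M+2}$ satisfying $P_0^u = \tilde A_1 P_1 + \cdots + \tilde A_l P_l$. Since the $P_i$ have coefficients in $K$, solving for the coefficients of the $A_i$ as a $K$-linear system and clearing a common denominator in $K$ then produces $A_i \in K[X_0,\ldots,X_M]$ of the same degree and a nonzero $a \in K$ with $aP_0^u = \sum_i A_i P_i$.

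To control the valuations, I would fix this $u$, set $D=(4d)^{M+2}$, and regard the identity $aP_0^u = \sum_i A_i P_i$ as a homogeneous linear system over $K$ in the unknowns consisting of $a$ together with the coefficients of each homogeneous $A_i$ of degree $D$, with equations obtained by equating coefficients of every monomial of degree $ud$ on both sides. The total number of unknowns is bounded by $p := 1 + l\binom{D+M}{M} \le l\bigl(4(4d)^{M+2}\bigr)^M$, and each entry of the coefficient matrix is either a coefficient of $P_0^u$ (in the column of $a$) or a coefficient of some $P_i$ (in a column of $A_i$). Since coefficients of $P_0^u$ are $\mathbf{Z}$-linear combinations of $u$-fold products of coefficients of $P_0$, every entry has $\ord_\mathfrak{p} \ge u \cdot \min_{0\le i\le l} e_\mathfrak{p}(P_i)$ at every $\mathfrak{p} \in M_K$.

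Because the first step already supplies a solution with $a \neq 0$, Lemma \ref{l:35} (applied with $x_t = a$) produces a solution with $a \neq 0$ satisfying $\min\{\ord_\mathfrak{p}(a), e_\mathfrak{p}(A_1), \ldots, e_\mathfrak{p}(A_l)\} \ge \ell\, \nu_\mathfrak{p}$ for some positive integer $\ell \le p-1$, where $\nu_\mathfrak{p}$ is any lower bound on the $\ord_\mathfrak{p}$ of a matrix entry. Combining with the matrix estimate above then yields the claimed valuation inequality. The main obstacle I anticipate is numerical bookkeeping in this last step: reconciling the extra factor $u$ coming from the coefficients of $P_0^u$ with the stated bound $l_0 \le l(4(4d)^{M+2})^M$ requires care. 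A clean workaround is to rewrite the identity symmetrically as $\sum_{i=0}^l A_i P_i = 0$ by introducing $A_0 := -aP_0^{u-1}$ as an additional polynomial unknown of degree $\le D$; then every matrix entry is a single coefficient of some $P_i$ (so one may take $\nu_\mathfrak{p} = \min_i e_\mathfrak{p}(P_i)$ directly), and the factor $u$ enters only through the slightly enlarged unknown count, which still respects the stated bound.
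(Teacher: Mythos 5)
The paper does not give a proof of Theorem \ref{theoremB}: it is recalled verbatim, with a pointer to Jelonek \cite{Je}, Koll\'ar \cite{Ko}, and especially Ru--Wang \cite{RW}, Theorem~12, where the valuation statement is established. There is therefore no ``paper's proof'' to match against; you are supplying a sketch for a result the paper imports. Your overall strategy is the natural and expected one for that literature: invoke the effective Nullstellensatz over $\bar K$ to obtain the identity $aP_0^u=\sum_i A_iP_i$ with degree bounds, descend to $K$ by linear algebra, and control valuations via the Masser--Wustholz-type Lemma~\ref{l:35}. You also correctly isolate the delicate numerical point, namely that the column of the coefficient matrix corresponding to the unknown $a$ consists of coefficients of $P_0^u$, which can have $\mathfrak p$-adic order as low as $u\cdot e_\mathfrak{p}(P_0)$ and hence spoils the bound on $l_0$.

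The ``clean workaround'' you propose, however, does not close this gap. If you relax to the system $\sum_{i=0}^{l}A_iP_i=0$ and allow $A_0$ to be an arbitrary homogeneous polynomial of the right degree, then applying Lemma~\ref{l:35} with $x_t$ a nonzero coefficient of $A_0$ produces a solution $(A_0',A_1',\ldots,A_l')$ in which $A_0'$ is merely a nonzero polynomial --- there is no reason for it to be a scalar multiple of $P_0^{u-1}$. The resulting relation $A_0'P_0+\sum_{i\ge1}A_i'P_i=0$ is a syzygy among $P_0,\ldots,P_l$, not a Nullstellensatz identity $a'P_0^{u}=\sum_{i\ge 1}A_i'P_i$, so the conclusion of Theorem~\ref{theoremB} is not recovered. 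To preserve the Nullstellensatz shape one must constrain $A_0$ to the one-dimensional $K$-span of $P_0^{u-1}$, which is precisely the formulation in which coefficients of $P_0^{u}$ reappear as matrix entries and the factor $u$ re-enters $\nu_\mathfrak{p}$. The tension you flagged is genuine, and your proposed resolution introduces a real gap rather than resolving it. As written, the $\mathfrak p$-adic half of the theorem is not established by your argument; one would have to follow the bookkeeping in \cite{RW} (or carry the factor $u$ through and verify that the enlarged $l_0$ still suffices for the downstream Lemmas~\ref{l:3.6} and~\ref{l:37}) to obtain the stated bound $l_0\le l(4(4d)^{M+2})^M$.
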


 Let $X$ be a smooth $n$-dimensional irreducible projective subvariety of  $\mathbb{P}^M$ defined over $K$ of degree $\triangle$.  Let $I_X$ be the prime ideal of  $K[X_0,\ldots, X_M]$ defining $X$. For each integer $m$, let $K[X_0,\ldots, X_M]_m$ denote the vector space of homogeneous polynomials of degree $m$ in $K[X_0,\ldots, X_M]$ (including 0). Denote by $(I_X)_m=K[X_0,\ldots, X_M]_m\cap I_X.$
 Let $P_1,\ldots, P_r\in K[X_0,\ldots, X_M]$ be the canonical polynomials from the Chow form $F_X$ of $X$ defined in \eqref{e:002}. 
By using the same method as in Ru-Wang \cite{RW}, Lemma 14, we will prove a slight generalization of this result. The proofs are almost the same. We only modify some constants apprearing in the proofs.  
  
 \begin{lemma} {\label{l:3.6}} Let $X\subset \mathbb{P}^M$ be a smooth projective variety defined over $K$ with dimension $n\geq 1$ and degree $\triangle$ and $m\geq\max\{3,(n+1)\triangle\}$. Let $\phi_1,\ldots,\phi_{H_X(m)}$ be a fixed  monomial basis of $K[X_0,\ldots,X_M]_m/(I_X)_m$. 
Let $Q$ be a homogeneous polynomial of degree $m$ satisfying $e_\mathfrak{p}(Q)\leq 0$ for all $\mathfrak{p}\in M_K$. Then, there exist $\alpha_0\not =0$ and $\alpha_j, (1\leq j\leq H_X(m))$ in $K$ such that
$$\alpha_0Q\equiv\sum_{j=1}^{H_X(m)}\alpha_j\phi_j\;\,\text{mod}\,\; I_X.$$
Furthermore, 
  we have
$$\ord_\mathfrak{p}(\alpha_j)\geq b(m,n,M)(e_\mathfrak{p}(F_X)+e_\mathfrak{p}(Q)), (0\leq j\leq H_X(m))$$
and 
$$\ord_\mathfrak{p}(\alpha_j)\deg\mathfrak{p}\leq -b(m,n,M)\sum_{\mathfrak{q}\in M_K\backslash\{\mathfrak{p}\}}(e_\mathfrak{q}(F_X)+e_\mathfrak{q}(Q))\deg \mathfrak{q}\;\text{if} \;\alpha_j\not=0$$
where $$b(m,n,M)=(4m)^{n+1}+(5(n+1)\triangle)^{\frac{(n+1)M(M-1)}{2}+M2^M}.$$
\end{lemma}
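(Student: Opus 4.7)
My plan is to produce the coefficients $\alpha_j$ as a nontrivial null solution of a finite homogeneous linear system over $K$, and then apply Lemma~\ref{l:35} to obtain the required bounds on $\ord_\mathfrak p(\alpha_j)$. Existence without any height control is automatic: the basis property of the $\phi_j$ in $K[X_0,\ldots,X_M]_m/(I_X)_m$ yields unique $c_j \in K$ with $Q - \sum_j c_j \phi_j \in (I_X)_m$, so $(\alpha_0, \alpha_j) = (1, c_j)$ is already a solution; moreover, the linear independence of $\{\bar\phi_j\}$ in the quotient forces every nontrivial solution to have $\alpha_0 \neq 0$, so Lemma~\ref{l:35} with pivot index $t$ corresponding to $\alpha_0$ is applicable.

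To turn the membership $\alpha_0 Q - \sum_j \alpha_j \phi_j \in (I_X)_m$ into an explicit linear system, I would use the canonical polynomials $P_\sigma$ ($\sigma \in \mathcal M$) of degree $(n+1)\triangle$ associated to the Chow form in \S 2.2. Since $X$ is smooth, Catanese's theorem (Theorem~\ref{theoremA}) ensures that these $P_\sigma$ cut out $X$ scheme-theoretically, and combining this with the Hermann--Seidenberg--Renschuch degree bound (or, failing that, the effective Nullstellensatz, Theorem~\ref{theoremB}) produces homogeneous polynomials $A_\sigma$ of effectively bounded degree realizing
$$\alpha_0 Q - \sum_{j=1}^{H_X(m)} \alpha_j \phi_j \;=\; \sum_{\sigma\in\mathcal M} A_\sigma P_\sigma,$$
possibly after multiplying through by a suitable power of a linear form to reconcile degrees left ambiguous by ``scheme-theoretic'' cut out. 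Promoting the monomial coefficients of each $A_\sigma$ to additional unknowns and equating coefficients of each ambient monomial produces a homogeneous $K$-linear system in $p$ unknowns. Its coefficients come from $Q$, from the monomials $\phi_j$, or from the $P_\sigma$, so by \eqref{e:003} and the hypothesis $e_\mathfrak p(Q) \leq 0$, the minimum $\ord_\mathfrak p$ of any coefficient of this system is bounded below by $e_\mathfrak p(F_X) + e_\mathfrak p(Q)$.

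Applying Lemma~\ref{l:35} then yields a solution with $\alpha_0 \neq 0$ and $\ord_\mathfrak p(\alpha_j) \geq l\bigl(e_\mathfrak p(F_X) + e_\mathfrak p(Q)\bigr)$ for some $l \leq p - 1$; since $e_\mathfrak p(F_X) + e_\mathfrak p(Q) \leq 0$, bounding $p - 1$ from above by $b(m,n,M)$ delivers the first inequality of the lemma. The bound $p - 1 \leq b(m,n,M)$ naturally breaks into two pieces: the first summand $(4m)^{n+1}$ absorbs $H_X(m) + 1$ (via Lemma~\ref{l:21} and the hypothesis $m \geq \max\{3,(n+1)\triangle\}$), while the second summand $(5(n+1)\triangle)^{(n+1)M(M-1)/2 + M \cdot 2^M}$ absorbs $|\mathcal M|$ (bounded by \eqref{e:004}) times the monomial count coming from the Hermann degree bound $(2d)^{2^M}$---the exponent $M\cdot 2^M$ being the telltale signature of Hermann's theorem. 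The second inequality of the lemma follows by combining the first with the sum formula $\sum_\mathfrak q \ord_\mathfrak q(\alpha_j) \deg\mathfrak q = 0$ valid for $\alpha_j \in K^*$.

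The principal obstacle will be twofold. First, Catanese's theorem gives only scheme-theoretic cut out, so the displayed identity may require passing through the saturation of the ideal $(P_\sigma)$---concretely, multiplying by powers of the variables $X_i$ or invoking Hermann--Seidenberg--Renschuch in an ungraded form and isolating the degree-$m$ homogeneous component---with careful tracking of the resulting ambient degree. Second, the elementary but delicate numerical estimates that collapse $|\mathcal M|$ times the $A_\sigma$-monomial count into the clean power $(5(n+1)\triangle)^{(n+1)M(M-1)/2 + M \cdot 2^M}$ must be carried out carefully: they pin down the specific constants $4$, $5$, and the composite exponent appearing in $b(m,n,M)$, and are responsible for the $m$-independence of the second summand.
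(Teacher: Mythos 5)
Your proposal follows the paper's proof essentially verbatim: a starting solution $(\alpha_0,\alpha_j)=(1,c_j)$ from the basis property, the Chow-form polynomials $P_\sigma$ combined with Catanese's theorem and the Hermann--Seidenberg--Renschuch degree bound (and the homogenization step introducing a power of $X_0$, dealt with afterwards via primality of $I_X$), then Lemma~\ref{l:35} with pivot $\alpha_0$ and the sum formula, with the two summands of $b(m,n,M)$ tracking exactly the counts you describe. One small remark: your parenthetical claim that \emph{every} nontrivial solution has $\alpha_0\neq 0$ is stated for the reduced system in $(\alpha_0,\alpha_j)$ alone and is neither needed nor true of the enlarged system once the $A_\sigma$-coefficients become unknowns---Lemma~\ref{l:35} only requires the existence of \emph{some} solution with $\alpha_0\neq 0$, which your $(1,c_j)$ already supplies.
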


\begin{proof} As $h(F_X)=h(\alpha F_X)$ for $\alpha\in K^*$, we may assume that one of coefficients of $F_X$
is 1. Without loss of generality, we also assume that $X$ is not contained in the coordinate hyperplane $\{X_0=0\}$ of $\mathbb{P}^M$. Since $\phi_1,\ldots,\phi_{H_X(m)}$ is a fixed monomial basis of $K[X_0,\ldots,X_M]_m/ (I_X)_m$, there exist $\gamma_i\in K, (1\leq i\leq H_X(m))$ such that
$$Q-\sum_{j=1}^{H_X(m)}\gamma_j\phi_j\in I_X.$$
Put $G=Q-\sum_{j=1}^{H_X(m)}\gamma_j\phi_j.$ Let $p_1,\ldots,p_r$ and $g$ be the dehomogenisation of $P_1,\ldots,P_r$ and $G$, respectively, along $X_0\not=0.$ Then, by theorem \ref{theoremA} and \ref{theoremB}, there exist $g_1,\ldots,g_r\in K[X_1,\ldots,X_M]$ with degree bounded by
$$(2(n+1)\triangle)^{2^M}$$
(here we note that the degree of $p_i$ is at most $(n+1)\triangle $ )
such that
$$g=g_1p_1+\cdots g_rp_r.$$
We then homogenize the above equation to obtain
$$X_0^uG=G_1P_1+\cdots G_rP_r,$$
where $u\leq (2(n+1)\triangle)^{2^M}.$ Since $G$ and $P_1,\ldots,P_r$ are homogeneous, we may further assume that $G_1,\ldots,G_r$ are homogeneous. We take $\alpha_0=1, \alpha_j=\gamma_j (1\leq j\leq H_X(m))$ and obtain
\begin{align}\label{e:007}
X_0^u(\alpha_0 Q-\sum_{j=1}^{H_X(m)}\alpha_j\phi_j)=G_1P_1+\cdots G_rP_r \in I_X,
\end{align}
Comparing the monomials in $X_0,\ldots,X_M$ in \eqref{e:007}, we obtain a system of linear equations in the coefficients of $G_i(X_0,\ldots,X_M) (1\leq i\leq r)$ and $\alpha_j (0\leq j\leq H_X(m))$. Note that $G_i(X_0,\ldots,X_M)$ is a homogeneous polynomial in $M+1$ variables and of total degree no bigger than $(2(n+1)\triangle)^{2^M}$, so the number of the coefficients of $G_i$ is at most $\binom{(2(n+1)\triangle)^{2^M}+M}{M}.$ Therefore, the total number of unknowns of this linear system is 
\begin{align}\label{e:008}
p\leq r\cdot\binom{(2(n+1)\triangle)^{2^M}+M}{M}+H_X(m)+1
\end{align}
Applying Lemma \ref{l:35}  to this linear system with $\alpha_0\not=0$, we may select new coefficients of $G_i(X_0,\ldots,X_M),  (1\leq i\leq r)$ and $\alpha_j$ such that $\alpha_0\not=0.$ Furthermore, there exists a positive integer $l\leq p-1$ such that
\begin{align}\label{e:009}
\ord_\mathfrak{p}(\alpha_j)\geq l\cdot \min\{e_\mathfrak{p}(F_X), e_\mathfrak{p}(Q)\}\geq l\cdot (e_\mathfrak{p}(F_X)+ e_\mathfrak{p}(Q)),\\
 \mathfrak{p}\in M_K, 0\leq j\leq H_X(m).\notag
\end{align}
(Note that $e_\mathfrak{p}(Q)\leq 0$.)

If $\alpha_j\not=0,$ by sum formula, we have
$$\ord_\mathfrak{p}(\alpha_j)\deg\mathfrak{p}=-\sum_{\mathfrak{q}\in M_K\backslash\{\mathfrak{p}\}}\ord_\mathfrak{q}(\alpha_j)\deg \mathfrak{q}\leq -l\sum_{\mathfrak{q}\in M_K\backslash\{\mathfrak{p}\}}(e_\mathfrak{q}(F_X)+e_\mathfrak{q}(Q))\deg \mathfrak{q}$$
Moreover, since $I_X$ is a prime ideal and $X$ is not contained in the coordinate hyperplane $\{X_0=0\}$, \eqref{e:007} implies that $\alpha_0 Q-\sum_{j=1}^{H_X(m)}\alpha_j\phi_j\in I_X$, where $\alpha_j'$s are the new coefficients satisfying \eqref{e:009}.

Now, we estimate $p$ introduced in \eqref{e:008}. We have
\begin{align*}H_X(m)\leq \triangle \binom{m+n}{n}&=\triangle\dfrac{(m+n)\cdots (m+1)}{n!}\\
&< m(m+n)^n\leq (2m)^{n+1}\; (\text{notice that}\; m>(n+1)\triangle)
\end{align*}
In a similar way, we have
$$\binom{(2(n+1)\triangle)^{2^M}+M}{M}\leq  (2(2(n+1)\triangle)^{2^M})^M\leq(4(n+1)\triangle)^{2^MM}.$$
The number $r$ introduced in \eqref{e:004} can be bounded by
\begin{align}\label{e:0010}
r\leq\binom{(n+1)\triangle+\frac{M(M-1)}{2}}{(n+1)\triangle}^{n+1}\leq (5(n+1)\triangle)^{\frac{(n+1)M(M-1)}{2}}
\end{align}
Here, we use the following inequality
\begin{align*}
\binom{A+B}{B}&\leq \dfrac{(A+B)^{A+B}}{A^AB^B}=\left(1+\dfrac{B}{A}\right)^A\cdot \left(1+\dfrac{A}{B}\right)^B\\
&\leq\left(e\left(1+\dfrac{A}{B}\right)\right)^B=e^B\left(\dfrac{1}{A}+\dfrac{1}{B}\right)^B\cdot A^B,
\end{align*}
where $A, B$ are positive integers and $e$ is the natural exponential number.
Hence,
$$l\leq p-1\leq (4m)^{n+1}+(5(n+1)\triangle)^{\frac{(n+1)M(M-1)}{2}+2^MM}$$

\end{proof}

We recall a Lemma from (\cite{RW}, Lemma 15)

\begin{lemma}\label{lemmaC} Let the notation be as in Lemma \ref{l:3.6}. We define 
$$\Phi(x)=[\phi_1(x):\cdots:\phi_{H_X(m)}(x)].$$
 Then for each $\mathfrak{p}\in M_K$ and every $x=(x_0,\ldots,x_M)\in X(K)$, we have
$$me_{\mathfrak{p}}(x)\deg \mathfrak{p}\leq e_{\mathfrak{p}}(\Phi(x))\deg \mathfrak{p}\leq me_{\mathfrak{p}}(x)\deg \mathfrak{p}+b(m,n,M)h(F_X),$$
and
$$m h(x)-(M+2)b(m,n,M)h(F_X)\leq h(\Phi(x))\leq m h(x).$$
\end{lemma}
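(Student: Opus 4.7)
The plan is to first establish the two pointwise estimates on $e_\mathfrak{p}(\Phi(x))$ and then obtain the two height estimates by summing over prime divisors. The lower bound $e_\mathfrak{p}(\Phi(x)) \ge m e_\mathfrak{p}(x)$ is immediate: each basis element $\phi_j$ is a degree-$m$ monomial $X_0^{a_0}\cdots X_M^{a_M}$, so $\ord_\mathfrak{p}(\phi_j(x)) = \sum_k a_k\,\ord_\mathfrak{p}(x_k) \ge m\,e_\mathfrak{p}(x)$. Taking the minimum over $j$ and then summing over $\mathfrak{p}$ weighted by $\deg\mathfrak{p}$ immediately yields the upper height bound $h(\Phi(x)) \le m h(x)$.

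For the upper bound on $e_\mathfrak{p}(\Phi(x))$, I would apply Lemma \ref{l:3.6} to each of the $M+1$ monomials $Q_i = X_i^m$ (which satisfy $e_\mathfrak{q}(Q_i) = 0$ for every $\mathfrak{q}$), obtaining $\alpha_{0,i} \ne 0$ and $\alpha_{1,i},\ldots,\alpha_{H_X(m),i} \in K$ with $\alpha_{0,i}X_i^m \equiv \sum_j \alpha_{j,i}\phi_j \pmod{I_X}$. Given $\mathfrak{p}$, I would choose $i(\mathfrak{p})$ with $x_{i(\mathfrak{p})} \ne 0$ and $\ord_\mathfrak{p}(x_{i(\mathfrak{p})}) = e_\mathfrak{p}(x)$; such an index exists since $x$ is a projective point, and it guarantees $X \not\subset \{X_{i(\mathfrak{p})} = 0\}$, so the relation is nontrivial. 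Evaluating at $x \in X(K)$ gives $\alpha_{0,i}x_i^m = \sum_j \alpha_{j,i}\phi_j(x)$, and combining the identity $\ord_\mathfrak{p}(\alpha_{0,i}x_i^m) = \ord_\mathfrak{p}(\alpha_{0,i}) + m e_\mathfrak{p}(x)$ with the ultrametric inequality on the right and the first estimate $\ord_\mathfrak{p}(\alpha_{j,i}) \ge b(m,n,M)e_\mathfrak{p}(F_X)$ from Lemma \ref{l:3.6} yields
$$e_\mathfrak{p}(\Phi(x)) \le m e_\mathfrak{p}(x) + \ord_\mathfrak{p}(\alpha_{0,i(\mathfrak{p})}) - b(m,n,M) e_\mathfrak{p}(F_X).$$
Multiplying by $\deg\mathfrak{p}$ and invoking the second bound of Lemma \ref{l:3.6}, namely $\ord_\mathfrak{p}(\alpha_{0,i})\deg\mathfrak{p} \le b(m,n,M)\bigl(h(F_X) + e_\mathfrak{p}(F_X)\deg\mathfrak{p}\bigr)$, causes the $e_\mathfrak{p}(F_X)$ terms to cancel and produces the desired right-hand inequality.

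For the height lower bound, summing the refined pointwise inequality
$$e_\mathfrak{p}(\Phi(x))\deg\mathfrak{p} \le m e_\mathfrak{p}(x)\deg\mathfrak{p} + \ord_\mathfrak{p}(\alpha_{0,i(\mathfrak{p})})\deg\mathfrak{p} - b(m,n,M) e_\mathfrak{p}(F_X)\deg\mathfrak{p}$$
over $\mathfrak{p}$ rearranges to $h(\Phi(x)) \ge m h(x) - \sum_\mathfrak{p}\ord_\mathfrak{p}(\alpha_{0,i(\mathfrak{p})})\deg\mathfrak{p} - b(m,n,M) h(F_X)$. The main technical obstacle is that $i(\mathfrak{p})$ varies with $\mathfrak{p}$, so the sum formula does not apply directly to the middle sum. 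My workaround is the pointwise bound $\ord_\mathfrak{p}(\alpha_{0,i(\mathfrak{p})}) \le \sum_{i=0}^{M} \max\{0, \ord_\mathfrak{p}(\alpha_{0,i})\}$ (valid since the maximum of a finite set of orders is bounded by the sum of its positive parts), whose sum over $\mathfrak{p}$ equals $\sum_{i=0}^{M} h(\alpha_{0,i})$. The first bound of Lemma \ref{l:3.6}, combined with the normalization that one coefficient of $F_X$ equals $1$ (so $e_\mathfrak{q}(F_X) \le 0$ for all $\mathfrak{q}$), yields $h(\alpha_{0,i}) \le b(m,n,M) h(F_X)$ for each $i$; the three contributions add up to $(M+2) b(m,n,M) h(F_X)$, completing the proof.
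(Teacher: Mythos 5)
The paper does not prove this lemma; it simply recalls it as Lemma~15 of Ru--Wang~\cite{RW}, so there is no in-paper argument to compare against. Your reconstruction is correct and follows the natural route suggested by Lemma~\ref{l:3.6}. The easy direction $e_\mathfrak{p}(\Phi(x))\geq m\,e_\mathfrak{p}(x)$ and hence $h(\Phi(x))\leq m\,h(x)$ is fine as stated. For the upper bound you apply Lemma~\ref{l:3.6} to each $X_i^m$ (legitimate, since $e_\mathfrak{q}(X_i^m)=0$), pick for each $\mathfrak{p}$ an index $i(\mathfrak{p})$ with $\ord_\mathfrak{p}(x_{i(\mathfrak{p})})=e_\mathfrak{p}(x)$, evaluate the resulting identity $\alpha_{0,i}x_i^m=\sum_j\alpha_{j,i}\phi_j(x)$ at $x$, and use the ultrametric inequality together with $\ord_\mathfrak{p}(\alpha_{j,i})\geq b(m,n,M)\,e_\mathfrak{p}(F_X)$; feeding in the second bound of Lemma~\ref{l:3.6} for $\alpha_{0,i}$ makes the $e_\mathfrak{p}(F_X)$ terms cancel and gives exactly $e_\mathfrak{p}(\Phi(x))\deg\mathfrak{p}\leq m\,e_\mathfrak{p}(x)\deg\mathfrak{p}+b(m,n,M)h(F_X)$.

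The genuinely delicate point is the lower height bound, and your treatment of it is right. The pointwise inequality $e_\mathfrak{p}(\Phi(x))\deg\mathfrak{p}\leq m\,e_\mathfrak{p}(x)\deg\mathfrak{p}+b(m,n,M)h(F_X)$ cannot be summed directly (the constant term is not supported on finitely many primes), so one must sum the intermediate inequality involving $\ord_\mathfrak{p}(\alpha_{0,i(\mathfrak{p})})$. Because $i(\mathfrak{p})$ varies, replacing $\ord_\mathfrak{p}(\alpha_{0,i(\mathfrak{p})})$ by $\sum_{i}\max\{0,\ord_\mathfrak{p}(\alpha_{0,i})\}$ and then using $h(\alpha_{0,i})\leq b(m,n,M)h(F_X)$ (which needs the normalization $e_\mathfrak{q}(F_X)\leq 0$, as you note) is exactly the right device; the $M+1$ terms from the $\alpha_{0,i}$ plus the one from $-b(m,n,M)e_\mathfrak{p}(F_X)$ account precisely for the factor $M+2$. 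One cosmetic refinement: if some coordinate hyperplane contains $X$, restrict the index set $\{0,\ldots,M\}$ throughout to those $i$ with $X\not\subset\{X_i=0\}$ (this set still contains every $i(\mathfrak{p})$ and only shrinks the upper bound), so that each $\alpha_{0,i}$ you invoke comes from a nondegenerate application of Lemma~\ref{l:3.6}.
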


By using the same method as in Ru-Wang \cite{RW}, Lemma 16, we will prove a slight generalization of this result from general position to sub-general position. The proofs are almost the same. We only modify some constants apprearing in the proofs.

\begin{lemma}{\label{l:37} Let the notation be as in Lemma \ref{l:3.6}.  Let $Q_1,\ldots,Q_q,  (q\geq M+1)$ be homogeneous polynomials in $K[X_0,\ldots,X_M]$ of degree $d$, in $N$-subgeneral position with respect to X. For  given $\mathfrak{p}\in M_K$, and $  x\in X\backslash\cup_{i=1}^q\{Q_i=0\},$ we assume that 
$$\ord_{\mathfrak{p}}(Q_1(x))\geq\cdots\geq \ord_{\mathfrak{p}}(Q_q(x)).$$
Then
\begin{align*}&\ord_{\mathfrak{p}}(Q_i(x))\deg \mathfrak{p}-d\cdot e_\mathfrak{p}(x)\deg \mathfrak{p}\\
&\leq\left(6\max\{(N+1)\triangle, d\}\right)^{(n+1)(M^2+M)}\left(h(F_X)+h(Q_1,\ldots,Q_q)\right).
\end{align*}
for $\mathfrak{p}\in M_K$ and $N+1\leq i\leq q.$}
\end{lemma}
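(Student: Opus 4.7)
The plan is to imitate the proof of Lemma 16 in Ru--Wang \cite{RW}, adapting from general to $N$-subgeneral position by enlarging the family of polynomials fed into the effective Nullstellensatz. For fixed $i\geq N+1$, the set $\{Q_1,\dots,Q_N,Q_i\}$ consists of $N+1$ of the $Q_j$ and hence, by $N$-subgeneral position, has empty common zero locus on $X$; together with the canonical polynomials $P_{\sigma_1},\dots,P_{\sigma_r}$ produced from the Chow form, which cut out $X$, this collection has no common zero in $\bar K^{M+1}$ outside the origin.

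First I would pick $j_0\in\{0,\dots,M\}$ with $\ord_{\mathfrak{p}}(x_{j_0})=e_{\mathfrak{p}}(x)$, so that the monomial $X_{j_0}$ vanishes at every common zero of the collection above. Applying the effective Hilbert Nullstellensatz (Theorem \ref{theoremB}) with $d_{\max}:=\max\{d,(n+1)\triangle\}$ produces an integer $u\leq(4d_{\max})^{M+2}$, an element $a\in K^{*}$, and homogeneous polynomials $A_{1},\dots,A_{N},A_{N+1},B_{1},\dots,B_{r}$ of degree at most $(4d_{\max})^{M+2}$ satisfying
\[
aX_{j_0}^{u}=\sum_{k=1}^{N}A_{k}Q_{k}+A_{N+1}Q_{i}+\sum_{l=1}^{r}B_{l}P_{\sigma_l},
\]
together with the effective bound $\min\{\ord_{\mathfrak{q}}(a),e_{\mathfrak{q}}(A_{k}),e_{\mathfrak{q}}(B_{l})\}\geq l_{0}E_{\mathfrak{q}}$ for every $\mathfrak{q}\in M_{K}$, where $E_{\mathfrak{q}}:=\min\{0,e_{\mathfrak{q}}(Q_{1},\dots,Q_{q}),e_{\mathfrak{q}}(F_{X})\}\leq 0$ and $l_{0}\leq(N+1+r)(4(4d_{\max})^{M+2})^{M}$.

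Evaluating at $x\in X$ kills the $P_{\sigma_{l}}$ terms (since $P_{\sigma_{l}}(x)=0$ by Theorem \ref{theoremA}), leaving an identity among the $Q_{k}(x)$. Taking $\ord_{\mathfrak{p}}$, and using $\ord_{\mathfrak{p}}(x_{j_0})=e_{\mathfrak{p}}(x)$, the elementary bound $\ord_{\mathfrak{p}}(A_{k}(x))\geq e_{\mathfrak{p}}(A_{k})+(u-d)e_{\mathfrak{p}}(x)$, and crucially the ordering hypothesis $\ord_{\mathfrak{p}}(Q_{k}(x))\geq\ord_{\mathfrak{p}}(Q_{i}(x))$ for $k\leq N$, rearrangement gives $\ord_{\mathfrak{p}}(Q_{i}(x))-d\,e_{\mathfrak{p}}(x)\leq\ord_{\mathfrak{p}}(a)-\min_{k}e_{\mathfrak{p}}(A_{k})$. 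Multiplying by $\deg\mathfrak{p}$ and invoking the sum formula, one gets $\ord_{\mathfrak{p}}(a)\deg\mathfrak{p}\leq -l_{0}\sum_{\mathfrak{q}}E_{\mathfrak{q}}\deg\mathfrak{q}$, while the Nullstellensatz bound yields $-\min_{k}e_{\mathfrak{p}}(A_{k})\deg\mathfrak{p}\leq -l_{0}E_{\mathfrak{p}}\deg\mathfrak{p}\leq -l_{0}\sum_{\mathfrak{q}}E_{\mathfrak{q}}\deg\mathfrak{q}$, since every summand is nonnegative. Finally, because $E_{\mathfrak{q}}\leq 0$ one has $-E_{\mathfrak{q}}\leq -e_{\mathfrak{q}}(Q_{1},\dots,Q_{q})-e_{\mathfrak{q}}(F_{X})$, so the right-hand side is bounded by $2l_{0}\bigl(h(F_{X})+h(Q_{1},\dots,Q_{q})\bigr)$.

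The main obstacle is purely bookkeeping on the constant: one must verify, using \eqref{e:004} to bound $r$ and the same elementary binomial estimate used in Lemma \ref{l:3.6}, that $2l_{0}\leq(6\max\{(N+1)\triangle,d\})^{(n+1)(M^{2}+M)}$. This is tedious but routine and introduces no new idea. The only substantive departure from \cite{RW} is the replacement of the single polynomial $Q_{i}$ used there by the $N+1$ polynomials $Q_{1},\dots,Q_{N},Q_{i}$ in the input to the Nullstellensatz; this directly accounts for the appearance of $(N+1)\triangle$ rather than $(n+1)\triangle$ in the final constant.
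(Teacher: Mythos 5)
Your approach is essentially the one the paper uses: feed $P_1,\dots,P_r$ together with an $(N+1)$-element subset of the $Q_j$ into the effective Nullstellensatz, evaluate at $x\in X$ to kill the $P_\sigma$ terms, take $\ord_{\mathfrak p}$, and exploit the ordering to isolate the smallest order. The one structural difference is cosmetic: you run the argument once per index $i\geq N+1$ with the subset $\{Q_1,\dots,Q_N,Q_i\}$, whereas the paper applies the Nullstellensatz only to $\{Q_1,\dots,Q_{N+1}\}$, bounds $\ord_{\mathfrak p}(Q_{N+1}(x))$, and then gets the claim for all $i\geq N+1$ for free from the monotonicity hypothesis $\ord_{\mathfrak p}(Q_i(x))\leq \ord_{\mathfrak p}(Q_{N+1}(x))$. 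Since the renumbering depends on $\mathfrak p$ and $x$, both versions ultimately invoke the Nullstellensatz over the same finite family of subsets, so there is no real saving either way.

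There is one avoidable loss in your bookkeeping that you should fix. After reaching $\ord_{\mathfrak p}(Q_i(x))-d\,e_{\mathfrak p}(x)\leq \ord_{\mathfrak p}(a)-\min_k e_{\mathfrak p}(A_k)$, you bound each of the two terms on the right separately by $-l_0\sum_{\mathfrak q}E_{\mathfrak q}\deg\mathfrak q$, producing an extraneous factor of $2$. The sum formula actually gives the sharper statement $\ord_{\mathfrak p}(a)\deg\mathfrak p\leq -l_0\sum_{\mathfrak q\neq\mathfrak p}E_{\mathfrak q}\deg\mathfrak q$, and the Nullstellensatz bound at $\mathfrak p$ gives $-\min_k e_{\mathfrak p}(A_k)\deg\mathfrak p\leq -l_0E_{\mathfrak p}\deg\mathfrak p$; adding these reconstitutes $-l_0\sum_{\mathfrak q}E_{\mathfrak q}\deg\mathfrak q$ exactly once, giving $l_0\bigl(h(F_X)+h(Q_1,\dots,Q_q)\bigr)$ with no factor of $2$. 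This matters: the final numerical bound $l_0\leq\bigl(6\max\{(N+1)\triangle,d\}\bigr)^{(n+1)(M^2+M)}$ is not known to absorb an extra factor of $2$ without re-examination, and you explicitly defer that verification as ``routine.'' With the combined estimate the verification reduces to the paper's own $l_0$ bound, so you should use the paper's splitting rather than carry the factor of $2$.
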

\begin{proof}
As $h(F_X)=h(\alpha F_X)$ for $\alpha\in K^*$, we may assume that one of coefficients of $F_X$
is 1. Similarly, since $h(Q_1,\ldots,Q_q)=h(\alpha Q_1,\ldots,\alpha Q_q)$, we can make the same assumption for $Q_1$. Consequently, we have
$$e_\mathfrak{p}(F_X)\leq 0, \min_{1\leq i\leq q}e_\mathfrak{p}(Q_i)\leq 0,$$
for each $\mathfrak{p}\in M_K.$  Let 
$$d'=\max\{\deg P_1,\ldots, \deg P_r, d\}.$$

Since $Q_1, \ldots,Q_q$ are in $N-$subgeneral position with respect to $X\subset\mathbb{P}^M,$ then $P_1,\ldots,P_r,$
$ Q_1,\ldots, Q_{N+1}$ have no zeros in $\mathbb{P}^M(\bar{K})$. Theorem \ref{theoremB} tell us that there exists a constant $u\leq (4d')^{M+2}$ and polynomials $A_{j1},\ldots, A_{jr}, A_{j,r+1}, \ldots,A_{j,r+N+1}\in K[X_0,\ldots,X_M] $ of total degree at most $(4d')^{M+2}$ such that for $0\leq j\leq M$, we have
$$\alpha_jX_j^u=A_{j1}P_1+\cdots+A_{jr}P_r+A_{j,r+1}Q_1+\cdots+A_{j,r+N+1}Q_{N+1}$$
for some non-zero elements $\alpha_j$ of $K.$ Furthermore, there exists a positive integer
\begin{align}\label{e:011}  l_0\leq (r+N+1)(4(4d')^{M+2})^M
\end{align}
such that
\begin{align}\label{e:012}\min\{\ord_\mathfrak{p}(\alpha_0),\ldots,\ord_\mathfrak{p}(\alpha_M),e_\mathfrak{p}(A_{j1}),\ldots,e_\mathfrak{p}(A_{j,r+N+1})\}\\
\geq l_0\min\{e_\mathfrak{p}(P_i), e_\mathfrak{p}(Q_i)\}
\geq l_0\cdot\left(e_\mathfrak{p}(F_X)+\min_{1\leq i\leq q}e_\mathfrak{p}(Q_i)\right)\notag
\end{align}
for each $\mathfrak{p}\in M_K.$

 We may assume that $A_{ji}, (1\leq i\leq r+N+1)$ are homogeneous polynomials and therefore the degrees of $A_{j,r+1},\ldots,A_{j,r+N+1}$ are $u-d$.

Let $x\in X(K)\backslash\cup^q_{i=1}\{Q_i=0\}$. Then
$$\alpha_jx_j^u=A_{j,r+1}(x)Q_1(x)+\cdots+A_{j,r+N+1}(x)Q_{N+1}(x),$$
and hence, for all $j$, we have
\begin{align*}\ord_{\mathfrak{p}}(\alpha_j)+u. \ord_{\mathfrak{p}}(x_j)=\ord_{\mathfrak{p}}(A_{j,r+1}(x)Q_1(x)+\cdots+A_{j,r+N+1}(x)Q_{N+1}(x))\\
\geq \min_{1\leq i\leq N+1} \ord_{\mathfrak{p}}(A_{j,r+i}(x)Q_i(x))\\
\geq  \min_{1\leq i\leq N+1} \ord_{\mathfrak{p}}(A_{j,r+i}(x))+\min_{1\leq i\leq N+1} \ord_{\mathfrak{p}}(Q_i(x))\\
\geq (u-d)e_{\mathfrak{p}}(x)+\ord_{\mathfrak{p}}(Q_{N+1}(x))+ l_0\cdot\left(e_\mathfrak{p}(F_X)+\min_{1\leq i\leq q}e_\mathfrak{p}(Q_i)\right)
\end{align*} (Here, the last inequality follows from \eqref{e:012}). Hence
\begin{align}\ord_{\mathfrak{p}}(Q_{N+1}(x))\leq d e_{\mathfrak{p}}(x)+\max_{0\leq j\leq M}\{\ord_{\mathfrak{p}}(\alpha_j)\}- l_0\cdot\left(e_\mathfrak{p}(F_X)+\min_{1\leq i\leq q}e_\mathfrak{p}(Q_i)\right)\label{e:a}
\end{align}
for $\mathfrak{p}\in M_K.$
Since $\alpha_j\not=0, (0\leq j\leq M)$, from the sum formula and \eqref{e:012} we have
\begin{align*}\ord_\mathfrak{p}(\alpha_j)\deg \mathfrak{p}=-\sum_{\mathfrak{q}\in M_K\backslash\{\mathfrak{p}\}}\ord_\mathfrak{q}(\alpha_j)\deg \mathfrak{q}\\
\leq\sum_{\mathfrak{q}\in M_K\backslash\{\mathfrak{p}\}}-l_0\cdot\left(e_\mathfrak{q}(F_X)+\min_{1\leq i\leq q}e_\mathfrak{q}(Q_i)\right)\deg \mathfrak{q}.
\end{align*}
 Combining with \eqref{e:a}, we have
 \begin{align*}
  \ord_\mathfrak{p}(Q_{N+1})\deg \mathfrak{p}\leq \sum_{\mathfrak{q}\in M_K\backslash\{\mathfrak{p}\}}-l_0\cdot\left(e_\mathfrak{q}(F_X)+\min_{1\leq i\leq q}e_\mathfrak{q}(Q_i)\right)\deg \mathfrak{q}&\\
 +d\cdot e_\mathfrak{p}(x)\deg \mathfrak{p} -l_0\cdot\left(e_\mathfrak{p}(F_X)+\min_{1\leq i\leq q}e_\mathfrak{p}(Q_i)\right)\deg \mathfrak{p}&\\
 = d\cdot e_\mathfrak{p}(x)\deg \mathfrak{p}+l_0\left(h(F_X)+h(Q_1,\ldots,Q_q)\right)&
 \end{align*}
Now, we estimate $l_0$ introduced in \eqref{e:011}. By \eqref{e:0010}, we have 
$$l_0\leq \left(6\max\{(N+1)\triangle, d\}\right)^{(n+1)(M^2+M)}$$
\end{proof}

\section{Proof of  Theorem \ref{t:11}} 

 Now, we recall the following effective version of the classical Schmidt's subspace theorem for function fields in \cite{W}. 
\vskip0.2cm
\noindent
\textbf{Theorem A.}
 {\it Let $K$ be the function field of a nonsingular projective variety $V$ defined over an algebraically closed field of characteristic 0. Let $H_1,\ldots,H_q$ be hyperplanes in $\mathbb{P}^n(K)$ and S be a finite set of prime divisors of  V. Then there exists an effectively computable  finite union $\mathcal{R}$ of proper linear subspaces of $\mathbb{P}^n(K)$ depending only on the given hyperplanes such that the following is true. Given $\epsilon>0$, there exist effectively computable constants $c_{\epsilon}$ and $c'_{\epsilon}$ such that for any $x\in\mathbb{P}^n(K)\backslash\mathcal{R}$ either
$$h(x)\leq c_{\epsilon},$$
or $$\sum_{\mathfrak{p}\in S}\max_J\sum_{j\in J}\lambda_{\mathfrak{p},H_j}(x)\leq (n+1+\epsilon)h(x)+c'_{\epsilon},$$
where the maximum is taken over all subsets J of $\{1,\ldots,q\}$ such that the linear forms $H_j, j\in J$ are linearly independent.}\\
\begin{remark}\label{re:1} The constants $c_{\epsilon}$ and $c'_{\epsilon}$ depend on $\epsilon$, the degree of the canonical divisor class of $V$, the projective degree of $V$, the degree of $S$ (i.e $\sum_{\mathfrak{p}\in S} \deg \mathfrak{p}$) and $h(H_1,\ldots, H_q).$
\end{remark}
We will prove  that theorem \ref{t:11} is an implication of the following theorem.
\begin{theorem}\label{t:41}
Let $K$ be the function field of a nonsingular projective variety $V$ defined over an algebraically closed field of characteristic 0. Let $X$ be a smooth n-dimensional projective subvariety of $\mathbb{P}^M$ defined over K with projective degree $\triangle$. Let S be a finite set of prime divisors of V. Let $Q_i, 1\leq i\leq q,$ be homogeneous polynomials  in $K[X_0,\ldots,X_M]$ in N-subgeneral position with respect to X.

Assume further that, $Q_i, 1\leq i\leq q$ have the same degree $d$ and for each $i$, $Q_i$ has at least one coefficient equal to 1.  Then for any given $\epsilon>0$, there exists an effectively computable finite union $\mathfrak{U}_{\epsilon}$ of proper algebraic subsets of $\mathbb{P}^M(K)$ not containing $X$  and effectively computable constants $\tilde{c}_{\epsilon}, \tilde{c}'_{\epsilon}$ such that for any $x\in X \backslash\mathfrak{U}_{\epsilon}$ either
$$h(x)\leq \tilde{c}_{\epsilon},$$
or 
$$\sum_{i=1}^q\sum_{\mathfrak{p}\in S}d^{-1}\lambda_{\mathfrak{p},Q_i}(x)\leq (N(n+1)+\epsilon)h(x)+\tilde{c}'_{\epsilon}.$$

 The constants $\tilde{c}_{\epsilon}, \tilde{c}'_{\epsilon}$ given in \eqref{e:014}, \eqref{e:015} and the algebraic subsets in $\mathfrak{U}_\epsilon$ depend on $\epsilon, M,N, q, K, S, V, X$ and the $Q_i.$
\end{theorem}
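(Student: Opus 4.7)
\emph{Plan.}
The plan is to reduce Theorem~\ref{t:41} to the effective hyperplane Schmidt theorem (Theorem~A) applied to a map $\Phi: X \to \mathbb{P}^{H-1}$ built from a monomial basis of $K[X_0,\ldots,X_M]_m/(I_X)_m$, combined with a combinatorial trick that converts $N$-subgeneral position into general position. Set $\delta := \epsilon/N$ and choose an integer $m$ divisible by $d$ with $m \geq \max\{3,(n+1)\triangle,a_\delta\}$, where $a_\delta$ is the constant from Proposition~\ref{p:23}, so that
$$\frac{m(H_X(m)+1)}{\sum_{t=1}^{m/d-1} H_X(td)} \leq d(n+1+\delta).$$
Write $H := H_X(m)$, fix a monomial basis $\phi_1,\ldots,\phi_H$ of $K[X_0,\ldots,X_M]_m/(I_X)_m$, and set $\Phi(x) := [\phi_1(x):\cdots:\phi_H(x)]$.

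For each $\mathfrak{p} \in S$ and $x \in X(K) \setminus \bigcup_i\{Q_i=0\}$, reorder the $Q_i$ so that $\lambda_{\mathfrak{p},Q_{i_1}}(x) \geq \cdots \geq \lambda_{\mathfrak{p},Q_{i_q}}(x)$. By Lemma~\ref{l:37}, the values $\lambda_{\mathfrak{p},Q_{i_k}}(x)$ for $k \geq N+1$ are bounded by an effective constant. The top $N+1$ polynomials $Q_{i_1},\ldots,Q_{i_{N+1}}$ have empty common zero on $X$ by the $N$-subgeneral position hypothesis, so a greedy extraction starting with $Q_{i_1}$ yields indices $j_1=1<j_2<\cdots<j_{n+1}$ in $\{1,\ldots,N+1\}$ such that $\{Q_{i_{j_1}},\ldots,Q_{i_{j_{n+1}}}\}$ is in general position with $X$. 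Since $i_1$ sits in the extracted subset,
$$\sum_{k=1}^{N}\lambda_{\mathfrak{p},Q_{i_k}}(x) \leq N\lambda_{\mathfrak{p},Q_{i_1}}(x) \leq N\sum_{k=1}^{n+1}\lambda_{\mathfrak{p},Q_{i_{j_k}}}(x) \leq N\max_{J\in\mathcal{J}}\sum_{j\in J}\lambda_{\mathfrak{p},Q_j}(x),$$
where $\mathcal{J}$ denotes the finite collection of $(n+1)$-subsets of $\{1,\ldots,q\}$ in general position with $X$.

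For each $J \in \mathcal{J}$ and each multi-index $\mathbf{c} \in \mathbb{N}^{n+1}$ with $|\mathbf{c}|=m/d$, the product $Q_J^{\mathbf{c}} := \prod_{j\in J}Q_j^{c_j}$ is homogeneous of degree $m$, so by Lemma~\ref{l:3.6} we may write $\alpha_{J,\mathbf{c}}Q_J^{\mathbf{c}} \equiv \sum_{l=1}^{H}\beta_{J,\mathbf{c},l}\phi_l \pmod{I_X}$ with effectively controlled $\ord_{\mathfrak{p}}(\alpha_{J,\mathbf{c}}), \ord_{\mathfrak{p}}(\beta_{J,\mathbf{c},l})$. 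This produces hyperplanes $L_{J,\mathbf{c}}$ of $\mathbb{P}^{H-1}$ satisfying $L_{J,\mathbf{c}}(\Phi(x)) = \alpha_{J,\mathbf{c}}Q_J^{\mathbf{c}}(x)$. For fixed $J$, a descending filtration of $K[X_0,\ldots,X_M]_m/(I_X)_m$ by vanishing order on the $Q_j$, $j \in J$, together with Lemma~\ref{l:22} applied to the fact that $\{Q_j : j \in J\}$ cuts out the empty set on $X$, supplies $H$ linearly independent hyperplanes chosen from the $L_{J,\mathbf{c}}$ whose combined $\lambda$-values at $\Phi(x)$ equal $(m/d)\sum_{j\in J}\lambda_{\mathfrak{p},Q_j}(x)$ up to an effective error. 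Applying Theorem~A to $\Phi(x)$ against the full family $\{L_{J,\mathbf{c}}\}_{J,\mathbf{c}}$, converting $h(\Phi(x))$ to $mh(x)$ via Lemma~\ref{lemmaC}, and invoking the choice of $m$ given by Proposition~\ref{p:23}, one arrives at
$$\sum_{\mathfrak{p}\in S}\max_{J\in\mathcal{J}}\sum_{j\in J}d^{-1}\lambda_{\mathfrak{p},Q_j}(x) \leq (n+1+\delta)h(x)+c'.$$

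The exceptional set $\mathfrak{U}_\epsilon$ consists of the $\Phi$-pullback of the proper linear subspaces of $\mathbb{P}^{H-1}$ produced by Theorem~A, augmented by $\{Q_i=0\}\cap X$; since $\Phi$ is a morphism whose image is not contained in any proper linear subspace, no component of $\mathfrak{U}_\epsilon$ contains $X$. Combining the displays above and using $\delta = \epsilon/N$,
$$\sum_{i=1}^q\sum_{\mathfrak{p}\in S}d^{-1}\lambda_{\mathfrak{p},Q_i}(x) \leq N(n+1+\delta)h(x)+\tilde c'_\epsilon = (N(n+1)+\epsilon)h(x)+\tilde c'_\epsilon.$$
The principal obstacle is the filtration argument in paragraph~3: one must verify that the selected $H$ hyperplanes are linearly independent via a rank count totalling $\sum_{t=1}^{m/d-1} H_X(td)$ (using both the general-position hypothesis on $\{Q_j : j \in J\}$ and Lemma~\ref{l:22}), and then convert the hyperplane $\lambda$-values back to $Q_j$ $\lambda$-values with the factor $m/d$ that activates Proposition~\ref{p:23}. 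A secondary task is propagating the effective constants of Lemma~\ref{l:3.6}, Lemma~\ref{l:37}, and Theorem~A into the explicit formulas \eqref{e:014}, \eqref{e:015}.
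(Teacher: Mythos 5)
The proposal diverges from the paper's proof at a crucial step, and the divergence introduces a genuine gap. In paragraph 2 you claim that, after ordering $Q_{i_1},\ldots,Q_{i_{N+1}}$ by order of vanishing at $\mathfrak{p}$, ``a greedy extraction starting with $Q_{i_1}$ yields indices $j_1=1<j_2<\cdots<j_{n+1}$ in $\{1,\ldots,N+1\}$ such that $\{Q_{i_{j_1}},\ldots,Q_{i_{j_{n+1}}}\}$ is in general position with $X$.'' This is false, and the fact that ``$i_1$ sits in the extracted subset'' is what your chain of inequalities relies on. Concretely, let $X=\P^2$, $n=2$, $N=3$, and take the degree-two forms $Q_1=X_0^2$, $Q_2=X_1^2$, $Q_3=X_0X_1$, $Q_4=X_2^2$. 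These are in $3$-subgeneral position (all four have empty common zero), and $\{Q_1,Q_2,Q_4\}$ is in general position, so a general-position triple exists. However, every triple containing $Q_3$ has a nonempty common zero ($[0{:}0{:}1]$, $[0{:}1{:}0]$, or $[1{:}0{:}0]$), so if $Q_3$ happens to be the top form at $\mathfrak{p}$, no general-position subset can contain it. The inequality $N\lambda_{\mathfrak{p},Q_{i_1}}(x)\leq N\sum_{k}\lambda_{\mathfrak{p},Q_{i_{j_k}}}(x)$, which needs $i_1$ to be among the $j_k$'s, therefore fails. There is also an imprecision later: the combined hyperplane $\lambda$-values produced by the filtration should be comparable to $S(m/d-1)\cdot\lambda_{\mathfrak{p},Q_{l_1}}(x)$, where $S(m/d-1)=\sum_{t=1}^{m/d-1}H_X(td)$, not ``$(m/d)\sum_{j\in J}\lambda_{\mathfrak{p},Q_j}(x)$''; the quantity $S(m/d-1)$ is exactly what Proposition \ref{p:23} compares against $m(H_X(m)+1)$, and the factor $m/d$ alone is far too small to give the right exponent.

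The paper's route sidesteps all of this and is actually simpler than yours. After ordering, \eqref{e:31} bounds $\sum_{i=1}^q\lambda_{\mathfrak{p},Q_i}(x)$ by $N$ times the \emph{single} quantity $\left(\ord_{\mathfrak{p}}(Q_{l_1(\mathfrak{p},x)}(x))-de_{\mathfrak{p}}(x)\right)\deg\mathfrak{p}$, using Lemma \ref{l:37} to kill the tail $N+1\leq i\leq q$. It then builds the filtration $W_i^{l_1(\mathfrak{p},x)}$ of $K[X_0,\ldots,X_M]_m/(I_X)_m$ by divisibility by powers of this \emph{one} polynomial $Q_{l_1(\mathfrak{p},x)}$ (which does not vanish on $X$), which requires no general-position subset at all; $\dim W_i=H_X(m-id)$ gives $\sum_j i_j=S(m/d-1)$ exactly, which hooks directly into Proposition \ref{p:23}. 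Your multi-index filtration over a general-position subset $J$ is the Evertse--Ferretti/Corvaja--Zannier style argument appropriate to general position, but the whole point of the single-polynomial filtration in the paper is to make the $N$-subgeneral case go through \emph{without} producing such a subset, which, as the counterexample shows, you cannot always do with the required polynomial inside. If you wish to repair your approach, you would need the much more delicate Nochka-weight machinery of Chen--Ru--Yan or Levin rather than a greedy extraction; otherwise, adopt the paper's single-polynomial filtration.
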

\begin{proof}

We will fix a $\mathfrak{p}\in S$ first. For each $x=[x_0,\ldots,x_M]\in \mathbb{P}^M(K)\backslash\cup^q_{i=1}\{Q_i=0\},$ there exists a renumbering $l_1(\mathfrak{p},x),\ldots,l_q(\mathfrak{p},x)$ of the indices $1,\ldots,q$ such that 
$$\ord_{\mathfrak{p}}(Q_{l_1({\mathfrak{p},x}}))\geq \ord_{\mathfrak{p}}(Q_{l_2({\mathfrak{p},x}}))\geq\cdots\geq \ord_{\mathfrak{p}}(Q_{l_q({\mathfrak{p},x}})).$$
Since $Q_1,\ldots, Q_q$ are in $N-$subgeneral position with respect to $X$, it follows from Lemma \ref{l:37} that 
\begin{align}\label{e:31}\sum^{q}_{i=1}\lambda_{\mathfrak{p},Q_i}(x)=\sum^{q}_{i=1}\left(\ord_{\mathfrak{p}}(Q_i(x))-de_{\mathfrak{p}}(x)-e_{\mathfrak{p}}(Q_i)\right)\text{deg} \mathfrak{p}\\
\leq\sum_{i=1}^N\left(\ord_{\mathfrak{p}}(Q_{l_i(\mathfrak{p},x)}(x))-de_{\mathfrak{p}}(x)\right)\deg \mathfrak{p}
+a(q-N)-\sum_{i=1}^q e_{\mathfrak{p}}(Q_i) \deg\,\mathfrak{p},\notag\\
\leq N\left(\ord_{\mathfrak{p}}(Q_{l_1(\mathfrak{p},x)}(x))-de_{\mathfrak{p}}(x)\right)\text{deg} \mathfrak{p}+a(q-N)-q e_{\mathfrak{p}}(Q_1,\ldots,Q_q) \deg\mathfrak{p},\notag
\end{align}
where $a=\left(6\max\{(N+1)\triangle, d\}\right)^{(n+1)(M^2+M)}(h(F_X)+h(Q_1,\ldots,Q_q)).$

For every positive integer m with $d|m$, we consider the following filtration on the vector space $K[X_0,\ldots,X_M]_m/(I_X)_m$ with respect to $Q_{l_1(\mathfrak{p},x)}$
$$X_m=W_0^{l_1(\mathfrak{p},x)}\supset W_1^{l_1(\mathfrak{p},x)}\supset\cdots\supset W_{m/d}^{l_1(\mathfrak{p},x)}$$
is defined by
$$W_i^{l_1(\mathfrak{p},x)}=\{g^*|g\in K[X_0,\ldots,X_M]_m, Q_{l_1(\mathfrak{p},x)}^i|g \},$$
where $g^*$ is the projection of $g$ to $K[X_0,\ldots,X_M]_m/(I_X)_m.$ Take a basis $\psi_1^{l_1(\mathfrak{p},x)},\ldots,\psi_{H_X(m)}^{l_1(\mathfrak{p},x)}$ of the vector space $K[X_0,\ldots,X_M]_m/(I_X)_m$ compatible with the filtration $W_i^{l_1(\mathfrak{p},x)},$ by this we mean that, for each $i=1,\ldots,m/d,$ it contains a basis of $W_i^{l_1(\mathfrak{p},x)}.$ 
 We can choose a basis $\psi_1^{l_1(\mathfrak{p},x)},\ldots,\psi_{H_X(m)}^{l_1(\mathfrak{p},x)}$ such that they can be written as following
\begin{align}\label{e:32}
\psi_j^{l_1(\mathfrak{p},x)}=Q_{l_1(\mathfrak{p},x)}^{i_j(\mathfrak{p},x)}.g_j,\end{align}
where $g_j\in X_{m-di_j(\mathfrak{p},x)}$ is chosen to be a monomial and $Q_{l_1(\mathfrak{p},x)}$ does not divide $g_j$. 
Hence
\begin{align*}\ord_{\mathfrak{p}}(\psi_j^{l_1(\mathfrak{p},x)}(x))&=i_j(\mathfrak{p},x)\ord_{\mathfrak{p}}(Q_{l_1(\mathfrak{p},x)}(x))+\ord_{\mathfrak{p}}(g_j(x))\\
&\geq i_j(\mathfrak{p},x)\ord_{\mathfrak{p}}(Q_{l_1(\mathfrak{p},x)}(x))+(m-di_j(\mathfrak{p},x))e_{\mathfrak{p}}(x)
\end{align*}
Therefore
\begin{align}\label{e:33}\sum_{j=1}^{H_X(m)}\ord_{\mathfrak{p}}(\psi_j^{l_1(\mathfrak{p},x)}(x))\geq\left(\sum_{j=1}^{H_X(m)} i_j(\mathfrak{p},x)\right)\ord_{\mathfrak{p}}(Q_{l_1(\mathfrak{p},x)}(x))\\
+\left(mH_X(m)
-d\left(\sum_{j=1}^{H_X(m)}i_j(\mathfrak{p},x)\right)\right)e_{\mathfrak{p}}(x).\notag
\end{align}

Now, we estimate $\sum_{j=1}^{H_X(m)} i_j(\mathfrak{p},x).$

It is clear that there are exactly $\dim(W_i^{l_1(\mathfrak{p},x)}/W_{i+1}^{l_1(\mathfrak{p},x)})$ elements $\psi_j^{l_1(\mathfrak{p},x)}$ with $i_j=i$ in the set $\psi_1^{l_1(\mathfrak{p},x)},\ldots,\psi_{H_X(m)}^{l_1(\mathfrak{p},x)}.$ Hence,
\begin{align}\label{e:34}\sum_{j=1}^{H_X(m)}i_j(\mathfrak{p},x)=\sum_{i=1}^{m/d}i.\dim (W_i^{l_1(\mathfrak{p},x)}/W_{i+1}^{l_1(\mathfrak{p},x)}),
\end{align}
 where $W_{m/d+1}^{l_1(\mathfrak{p},x)}:=\vec{0}.$

We notice that each element $\psi$ of $W_i^{l_1(\mathfrak{p},x)}$ can be represented as $\psi=Q_{l_1(\mathfrak{p},x)}^ig$ with $g\in K[X_0,\ldots, X_M]_{m-id}.$ Furthermore, two polynomials $g_1, g_2$ such that $Q_{l_1(\mathfrak{p},x)}^ig_1=Q_{l_1(\mathfrak{p},x)}^ig_2$ in $W_i^{l_1(\mathfrak{p},x)}$ if and only if $g_1-g_2$ vanishes identically in $X$. Hence, we have $\dim W_i^{l_1(\mathfrak{p},x)}=\dim K[X_0,\ldots,X_M]_{m-id}/(I_X)_{m-id}=H_X(m-id).$ Therefore,
\begin{align}\label{e:35}
\sum_{i=1}^{m/d}i &\dim (W_i^{l_1(\mathfrak{p},x)}/W_{i+1}^{l_1(\mathfrak{p},x)})=\sum_{i=1}^{m/d}i(\dim W_i^{l_1(\mathfrak{p},x)}-\dim W_{i+1}^{l_1(\mathfrak{p},x)})\\
&=\sum_{i=1}^{m/d}i\dim W_i^{l_1(\mathfrak{p},x)}-\sum_{i=1}^{m/d}((i+1)\dim W_{i+1}^{l_1(\mathfrak{p},x)}-\dim W_{i+1}^{l_1(\mathfrak{p},x)})\notag\\
&=\sum_{i=1}^{m/d}\dim W_i^{l_1(\mathfrak{p},x)}=\sum_{i=1}^{m/d}H_X(m-id)=\sum_{i=1}^{m/d-1}H_X(id)\notag
\end{align}
Denote by $S(t):=\sum\limits_{i=1}^{t}H_X(id)$. Then, combining \eqref{e:35}, \eqref{e:34}, \eqref{e:33}, we have
\begin{align*}\sum_{j=1}^{H_X(m)}\ord_{\mathfrak{p}}(\psi_j^{l_1(\mathfrak{p},x)}(x))\geq S(m/d-1)\ord_{\mathfrak{p}}(Q_{l_1(\mathfrak{p},x)}(x))\\
+(mH_X(m)-dS(m/d-1))e_{\mathfrak{p}}(x).\end{align*}
Hence, \begin{align}\label{e:37}
\sum_{j=1}^{H_X(m)}\left(\ord_{\mathfrak{p}}(\psi_j^{l_1(\mathfrak{p},x)}(x))-me_{\mathfrak{p}}(x)\right)\geq S(m/d-1)\left(\ord_{\mathfrak{p}}(Q_{l_1(\mathfrak{p},x)}(x))-de_{\mathfrak{p}}(x)\right)
\end{align}

Let $\phi_1,\ldots,\phi_{H_X(m)}$ be a fixed monomial basis of $K[X_0,\ldots, X_M]_m/ (I_X)_m$. 
Applying Lemma \ref{l:3.6}, for each $i$, there exists a nonzero element $\beta_i$ in $K$ and linear form $L_i\in K[Y_0,\ldots, Y_{H_X(m)-1}]$ with
\begin{align*}b(m,n,M) (e_\mathfrak{p}(F_X)+e_\mathfrak{p}(\psi_i^{l_1(\mathfrak{p},x)})) \deg \mathfrak{p}\leq \ord_\mathfrak{p}(\beta_i)\deg \mathfrak{p}\\
\leq -b(m,n,M)\sum_{\mathfrak{q}\in M_K\backslash \{\mathfrak{p}\}}(e_\mathfrak{q}(F_X)+e_\mathfrak{q}(\psi_i^{l_1(\mathfrak{p},x)}))\deg \mathfrak{q}.
\end{align*}
and
\begin{align*}b(m,n,M) (e_\mathfrak{p}(F_X)+e_\mathfrak{p}(\psi_i^{l_1(\mathfrak{p},x)})) \deg \mathfrak{p}\leq e_\mathfrak{p}(L_i)\deg \mathfrak{p}\\
\leq -b(m,n,M)\sum_{\mathfrak{q}\in M_K\backslash \{\mathfrak{p}\}}(e_\mathfrak{q}(F_X)+e_\mathfrak{q}(\psi_i^{l_1(\mathfrak{p},x)}))\deg \mathfrak{q}.
\end{align*}
such that
$$\beta_i\psi_i^{l_1(\mathfrak{p},x)}\equiv L_i(\phi_1,\ldots,\phi_{H_X(m)})\;\,\text{mod}\,\; I_X, 1\leq i\leq H_X(m)$$
We have
$$e_\mathfrak{p}(\psi_i^{l_1(\mathfrak{p},x)})=e_\mathfrak{p}(Q_{l_1(\mathfrak{p},x)}^{i_j(\mathfrak{p},x)})=i_j(\mathfrak{p},x)e_\mathfrak{p}(Q_{l_1(\mathfrak{p},x)})\geq m e_\mathfrak{p}(Q_1,\ldots,Q_q).$$
(Notice that  since $Q_i$ has one coefficient equal to 1, we have $e_\mathfrak{p}(Q_i)\leq 0$ for all $ \mathfrak{p}\in M_K$)

Therefore, 
\begin{align}\label{e:38}b(m,n,M) (e_\mathfrak{p}(F_X)+me_\mathfrak{p}(Q_1,\ldots,Q_q)) \deg \mathfrak{p}\leq \ord_\mathfrak{p}(\beta_i)\deg \mathfrak{p}\\
\leq -b(m,n,M)\sum_{\mathfrak{q}\in M_K\backslash \{\mathfrak{p}\}}(e_\mathfrak{q}(F_X)+me_\mathfrak{q}(Q_1,\ldots,Q_q))\deg \mathfrak{q}\notag.
\end{align}
and
\begin{align}\label{e:39}b(m,n,M) (e_\mathfrak{p}(F_X)+me_\mathfrak{p}(Q_1,\ldots,Q_q)) \deg \mathfrak{p}\leq e_\mathfrak{p}(L_i)\deg \mathfrak{p}\\
\leq -b(m,n,M)\sum_{\mathfrak{q}\in M_K\backslash \{\mathfrak{p}\}}(e_\mathfrak{q}(F_X)+me_\mathfrak{q}(Q_1,\ldots,Q_q))\deg \mathfrak{q}.\notag
\end{align}
Set $$\Phi(x)=[\phi_1(x):\ldots:\phi_{H_X(m)}(x)]: X\longrightarrow \mathbb{P}^{H_X(m)-1}.$$
Then, 
$$L_i(\Phi(x))=\beta_i\psi_i^{l_1(\mathfrak{p},x)}(x).$$
Combining with $\eqref{e:38}$, we have
\begin{align*}
\ord_\mathfrak{p}(L_i(\Phi(x)))\deg \mathfrak{p}=\ord_\mathfrak{p}(\beta_i)\deg \mathfrak{p}+\ord_\mathfrak{p}(\psi_i^{l_1(\mathfrak{p},x)}(x))\deg \mathfrak{p}\\
\geq \ord_\mathfrak{p}(\psi_i^{l_1(\mathfrak{p},x)}(x))\deg \mathfrak{p}+b(m,n,M) (e_\mathfrak{p}(F_X)+me_\mathfrak{p}(Q_1,\ldots,Q_q)) \deg \mathfrak{p}
\end{align*}
Applying Lemma \ref{lemmaC}, we have
\begin{align*}\lambda_{\mathfrak{p},L_i}(\Phi(x))=\left(\ord_\mathfrak{p}(L_i(\Phi(x))-e_{\mathfrak{p}}(\Phi(x))-e_\mathfrak{p}(L_i)\right)\deg\mathfrak{p}\\
\geq \ord_\mathfrak{p}(\psi_i^{l_1(\mathfrak{p},x)}(x))\deg \mathfrak{p}+b(m,n,M) (e_\mathfrak{p}(F_X)
+me_\mathfrak{p}(Q_1,\ldots,Q_q)) \deg \mathfrak{p}\\
-me_\mathfrak{p}(x)\deg \mathfrak{p}-b(m,n,M)h(F_X)-e_\mathfrak{p}(L_i)\deg\mathfrak{p}
\end{align*}
Combining with \eqref{e:39}, we have
\begin{align}\label{e:3010}
\lambda_{\mathfrak{p},L_i}(\Phi(x))\geq \ord_\mathfrak{p}(\psi_i^{l_1(\mathfrak{p},x)}(x))\deg \mathfrak{p}\\
-me_\mathfrak{p}(x)\deg \mathfrak{p}-b(m,n,M)(2h(F_X)+mh(Q_1,\ldots,Q_q))\notag
\end{align}


 From \eqref{e:37} and \eqref{e:3010}, we have 
\begin{align*}S(m/d-1)\left(\ord_{\mathfrak{p}}(Q_{l_1(\mathfrak{p},x)}(x))-de_{\mathfrak{p}}(x)\right)\deg\mathfrak{p}
\leq  \sum_{j=1}^{H_X(m)}\lambda_{\mathfrak{p},L_j}(\Phi(x))+b_1,
\end{align*}
where $b_1=(m+1) H_X(m)b(m,n,M)\left(h(F_X)+h(Q_1,\ldots,Q_q)\right).$\\
Since there are only $q$ choices of $Q_{l_1(\mathfrak{p},x)}\subset\{Q_1,\ldots,Q_q\},$ we have a finite collection of linear forms $L_1,\ldots,L_u$. Combining the above equation with \eqref{e:31}, we have
\begin{align}\label{e:310}
\sum_{\mathfrak{p}\in S}\sum^{q}_{i=1}\lambda_{\mathfrak{p},Q_i}(x)\leq\dfrac{N}{S(m/d-1)} \sum_{\mathfrak{p}\in S}\sum_{i\in K_{\mathfrak{p}}}\lambda_{\mathfrak{p},L_i}(\Phi(x))+\dfrac{Nb_2}{S(m/d-1)}+b_3,
\end{align}
where $b_2=|S|b_1,$ $b_3=a(q-N)|S|-q\sum_{\mathfrak{p}\in S}e_\mathfrak{p}(Q_1,\ldots,Q_q)\deg \mathfrak{p},$ and $K_\mathfrak{p}$ is an index set
 $\{i_1,\ldots, i_{H_X(m)}\}\subset \{1,\ldots, u\}$ such that $L_{i_1},\ldots, L_{i_{H_X(m)}}$ are linearly independent   and $\sum_{i\in K_\mathfrak{p}}L_{\mathfrak{p}, i}(\Phi(x))$ achieve maximum among all such index sets.

Apply theorem A for the family of linear forms $L_1,\ldots,L_u$ and $\epsilon=1$, then there exists a finite union of effectively computable linear subspaces $\mathcal{R}$ in $\mathbb{P}^{H_X(m)-1}$ and effectively computable constants $c_1, c'_{1}$ such that for all $\Phi(x)$ not contained in $\mathcal{R}$, either
\begin{align*}h(\Phi(x))\leq c_{1},\end{align*}
or 
\begin{align}\label{e:311}\sum_{\mathfrak{p}\in S}\sum_{i\in K_{\mathfrak{p}}}\lambda_{\mathfrak{p},L_i}(\Phi(x))\leq (H_X(m)+1)h(\Phi(x))+c'_{1}\leq(H_X(m)+1)mh(x)+c'_{1}.
\end{align}
In view of remark \ref{re:1} and \eqref{e:39}, the constants $c_{1}, c'_{1}$ can be bounded by the constants depend on $\epsilon$, the degree of the canonical divisor class of $V$, the projective degree of $V$, the degree of $S$ (i.e $\sum_{\mathfrak{p}\in S} \deg \mathfrak{p}$) and  $h(Q_1,\ldots,Q_q), h(F_X).$

By the latter case, equations \eqref{e:310}, \eqref{e:311} yields
\begin{align}\label{e:312}\sum_{\mathfrak{p}\in S}\sum^{q}_{i=1}\lambda_{\mathfrak{p},Q_i}(x)
\leq\dfrac{Nm(H_X(m)+1)}{S(m/d-1)}h(x)+\dfrac{N(b_2+c'_{1})}{S(m/d-1)}+b_3\end{align}
We apply Proposition \ref{p:23} for given $\epsilon>0$, then there exists $a_{\epsilon}$ effectively computable depending on $n, d, \triangle$ such that
$$\dfrac{(H_X(m)+1)m}{S(m/d-1)}<d(n+1+\dfrac{\epsilon}{N}),$$
for all $m>a_\epsilon, d|m.$
We choose $m:=d([a_{\epsilon}/d]+1).$ Then, by the latter case, from \eqref{e:312}, we have
$$\sum_{\mathfrak{p}\in S}\sum_{i=1}^q\lambda_{\mathfrak{p},Q_i}(x)\leq d(N(n+1)+\epsilon)h(x)+b_3+\dfrac{N(b_2+c'_{1})}{S(m/d-1)}.$$
 Moreover, we have
 \begin{align*}b_3+\dfrac{N(b_2+c'_{1})}{S(m/d-1)}=
  \left(6\max\{(N+1)\triangle, d\}\right)^{(n+1)(M^2+M)}\\
 \cdot (h(F_X)+h(Q_1,\ldots,Q_q))\cdot (q-N)|S|
 -q \sum_{\mathfrak{p}\in S}e_\mathfrak{p}(Q_1,\ldots,Q_q)\deg \mathfrak{p}\\
 + \dfrac{N\left(|S|(m+1) H_X(m)b(m,n,M)\left(h(F_X)+h(Q_1,\ldots,Q_q)\right)+c'_{1}\right)}{S(m/d-1)}\leq d\tilde{c}'_\epsilon,
 \end{align*}
 where 
 \begin{align}\label{e:014}
 \tilde{c}'_\epsilon=\frac{1}{d}
  \left(6\max\{(N+1)\triangle, d\}\right)^{(n+1)(M^2+M)}\\
 \cdot (h(F_X)+h(Q_1,\ldots,Q_q))\cdot (q-N)|S|
 +\frac{q}{d} h(Q_1,\ldots,Q_q)\notag\\
 + \dfrac{N\left(|S|(m+1) H_X(m)b(m,n,M)\left(h(F_X)+h(Q_1,\ldots,Q_q)\right)+c'_{1}\right)}{dS(m/d-1)}\notag
 \end{align} 
 (Here, we use the fact that $Q_i$ has at least one coefficient equal to 1, therefore $-\sum_{\mathfrak{p}\in M_K}e_\mathfrak{p}(Q_1,\ldots,Q_q)\deg \mathfrak{p}\leq h(Q_1,\ldots,Q_q)$.
 
  Hence, by the latter case, we have
 \begin{align}\label{e:017}\sum_{\mathfrak{p}\in S}\sum_{i=1}^qd^{-1}\lambda_{\mathfrak{p},Q_i}(x)\leq (N(n+1)+\epsilon)h(x)+\tilde{c}'_\epsilon
 \end{align}
By the first case, apply Lemma \ref{lemmaC}, then we have
\begin{align}\label{e:016}h(x)\leq \tilde{c}_\epsilon,
\end{align}
where 
\begin{align}\label{e:015}
\tilde{c}_\epsilon=\dfrac{c_{1}+(M+2)b(m,n,M)h(F_X)}{m}.
\end{align}

Finally, we may conclude our proof by the following fact. The morphism $\Phi=[\phi_1:\ldots:\phi_{H_X(m)}]: X\longrightarrow \mathbb{P}^{H_X(m)-1}$ has the property that either \eqref{e:016} or \eqref{e:017} hold for $x\in X$ with $\Phi(x)\not\in \mathcal{R}$. We now consider those $x\in X(K)$ with $\Phi(x)\in \mathcal{R}.$ We note that the exceptional linear subspaces $\mathcal{R}$ in theorem A do not depend on $\epsilon$, however in our case they do depend on $m$. Hence we write $\mathcal{R}$ as $\mathcal{R}_m.$ Let $\mathcal{U}_\epsilon$ be the collection of algebraic subsets of $X$ which are the inverse images of the algebraic subsets in $\mathcal{R}_m$ under the morphism $\Phi.$ As the components of the morphism $\Phi(x)$ are monomials of degree $m$ in $X_0,\ldots, X_M$, the conditions that $\Phi(x)$ is contained in a finite union of effectively computable linear subspaces $\mathcal{R}_m$ in $\mathbb{P}^{H_X(m)-1}$ is equivalent to that $x$ is contained in a set $\mathcal{U}_{\epsilon}$ containing finitely many effectively computable algebraic subsets of $\mathbb{P}^M$ with degree no more than $m.$  
\end{proof}
\noindent
\vskip0.2cm
\textbf{Proof of theorem \ref{t:11}}
 
 Let $Q_i, 1\leq i\leq q$ be homogeneous polynomials of degree $d_i$, respectively.  Let $d$ is the l.c.m of $d_i', 1\leq i\leq q.$ For each $i, 1\leq i\leq q,$ we choose one coefficient not equal to 0 of $Q_i$ and denote it by $a_i$.

We apply Theorem \ref{t:41} for $(\frac{Q_i}{a_i})^{d/d_i}, 1\leq i\leq q.$ Then, for a given $\epsilon$, there  there exists an effectively computable finite union $\mathfrak{U}_{\epsilon}$ of proper algebraic subsets of $\mathbb{P}^M(K)$ not containing $X$  such that for any $x\in X \backslash\mathfrak{U}_{\epsilon}$ either
$$h(x)\leq \tilde{c}_{\epsilon},$$
or 
$$\sum_{i=1}^q\sum_{\mathfrak{p}\in S}d^{-1}\lambda_{\mathfrak{p},(\frac{Q_i}{a_i})^{d/d_i}}(x)\leq (N(n+1)+\epsilon)h(x)+\tilde{c}'_{\epsilon},$$
where $\tilde{c}_\epsilon, \tilde{c}'_\epsilon$ are given in \eqref{e:014},\eqref{e:015}. 

The algebraic subsets in $\mathfrak{U}_\epsilon$  depends on $\epsilon, M,N, q, K, S, X$ and the $Q_i.$

Now, we estimate an upper bound for $\tilde{c}'_\epsilon.$

Since $e_\mathfrak{p}((\frac{Q_i}{a_i})^{d/d_i})=\frac{d}{d_i}e_\mathfrak{p}(\frac{Q_i}{a_i})\leq 0,\;\text{for all}\; i=1,\ldots,q.$ We have
$$e_\mathfrak{p}\left(\left(\frac{Q_1}{a_1}\right)^{d/d_1}, \ldots, \left(\frac{Q_q}{a_q}\right)^{d/d_q}\right)\geq e_\mathfrak{p}\left(\left(\frac{Q_1}{a_1}\right)^{d/d_1}\right)+\cdots+e_\mathfrak{p}\left(\left(\frac{Q_q}{a_q}\right)^{d/d_q}\right)$$
\begin{align*}
h\left(\left(\frac{Q_1}{a_1}\right)^{d/d_1}, \ldots, \left(\frac{Q_q}{a_q}\right)^{d/d_q}\right)&\leq h\left(\left(\frac{Q_1}{a_1}\right)^{d/d_1}\right)+\cdots+h\left(\left(\frac{Q_q}{a_q}\right)^{d/d_q}\right)\\
&=\dfrac{d}{d_1}h(Q_1)+\cdots+\dfrac{d}{d_q}h(Q_q)
\end{align*}
Therefore,
 \begin{align}\label{e:314}
 \tilde{c}'_\epsilon
 \leq \left(6\max\{(N+1)\triangle, d\}\right)^{(n+1)(M^2+M)}( \frac{1}{d} h(F_X)+\frac{1}{d_1}h(Q_1)+\ldots+\frac{1}{d_q} h(Q_q))\\
 \cdot (q-N)|S|
 +q(\frac{1}{d_1}  h(Q_1)+\ldots+\frac{1}{d_q} h(Q_q))\notag\\
 + \dfrac{N\left(|S|(m+1) H_X(m)b(m,n,M)\left(h(F_X)+\frac{d}{d_1} h(Q_1)+\ldots+\frac{d}{d_q} h(Q_q)\right)+c'_{1}\right)}{dS(m/d-1)}.\notag
 \end{align}
 We observe that
 \begin{align*}
 \lambda_{\mathfrak{p},(Q_i/a_i)^{d/d_i}}(x)&=\left(\frac{d}{d_i} \ord_{\mathfrak{p}}(Q_i(x)/a_i)-de_{\mathfrak{p}}(x)-\frac{d}{d_i} e_{\mathfrak{p}}(Q_i/a_i)\right)\text{deg}\,\mathfrak{p}\\
 &=\frac{d}{d_i}\lambda_{\mathfrak{p},Q_i}(x)
 \end{align*}
 Therefore,  for any $x\in X \backslash\mathfrak{U}_{\epsilon}$ we have either
$$h(x)\leq c_{\epsilon},$$ where $c_\epsilon=\tilde{c}_\epsilon,$
or 
$$\sum_{i=1}^q\sum_{\mathfrak{p}\in S}d_i^{-1}\lambda_{\mathfrak{p},Q_i}(x)\leq (N(n+1)+\epsilon)h(x)+c'_{\epsilon},$$
where \begin{align}\label{e:315}
 c'_\epsilon
 = \left(6\max\{(N+1)\triangle, d\}\right)^{(n+1)(M^2+M)}(\frac{1}{d} h(F_X)+\frac{1}{d_1}h(Q_1)+\ldots+\frac{1}{d_q} h(Q_q))\\
 \cdot (q-N)|S|
 +q(\frac{1}{d_1}  h(Q_1)+\ldots+\frac{1}{d_q} h(Q_q))\notag\\
 + \dfrac{N\left(|S|(m+1) H_X(m)b(m,n,M)\left(h(F_X)+\frac{d}{d_1} h(Q_1)+\ldots+\frac{d}{d_q} h(Q_q)\right)+c'_{1}\right)}{dS(m/d-1)}.\notag
 \end{align}

This completes the proof.
    

\end{document}